\numberwithin{equation}{section}
\newtheorem{dfn}{Definition}[section]
\newtheorem{thm}[dfn]{Theorem}
\newtheorem{lma}[dfn]{Lemma}
\DeclarePairedDelimiterX{\norm}[1]{\lVert}{\rVert}{#1}
\DeclarePairedDelimiterX{\bnorm}[1]{\big\lVert}{\big\rVert}{#1}
\DeclarePairedDelimiterX{\Bnorm}[1]{\Big\lVert}{\Big\rVert}{#1}
\newcommand\at[2]{\left.#1\right|_{#2}}
\newcommand{\D}{\mathbb{D}}
\newcommand{\hil}{\mathscr{H}}
\newcommand{\boh}{\mathcal{B}_1(\hil)}
\newcommand{\dds}{\dfrac{d}{ds}}
\newcommand{\cir}{\mathbb{T}}
\newcommand{\bnh}{\mathcal{B}_n(\mathscr{H})}
\newcommand{\bnf}{\mathcal{B}_n(\mathscr{F})}
\newcommand{\HDTzero}{\mathbf{H}^2_{\mathcal{D}_{T_0}}(\mathbb{D})}
\newcommand{\HDTzeros}{\mathbf{H}^2_{\mathcal{D}_{T_0^*}}(\mathbb{D})}
\newcommand{\HDT}{\mathbf{H}^2_{\mathcal{D}_{T}}(\mathbb{D})}	
\newcommand{\HDTs}{\mathbf{H}^2_{\mathcal{D}_{T^*}}(\mathbb{D})}
\newcommand{\PH}{P_{\mathscr{H}}}
\newcommand{\PF}{P_{\mathscr{F}}}
\newcommand{\DT}{\mathcal{D}_T}
\newcommand{\DTs}{\mathcal{D}_{T^*}}
\newcommand{\DTzero}{\mathcal{D}_{T_0}}
\newcommand{\DTzeros}{\mathcal{D}_{T_0^*}}
\begin{document}
	%\today
	
	\title[Higher-order spectral shift for pairs of contractions]{Higher-order spectral shift for pairs of contractions via multiplicative path}
	
	\author[Chattopadhyay] {Arup Chattopadhyay}
	\address{Department of Mathematics, Indian Institute of Technology Guwahati, Guwahati, 781039, India}
	\email{arupchatt@iitg.ac.in, 2003arupchattopadhyay@gmail.com}
	
	\author[Pradhan]{Chandan Pradhan}
	\address{Department of Mathematics, Indian Institute of Technology Guwahati, Guwahati, 781039, India}
	\email{chandan.math@iitg.ac.in, chandan.pradhan2108@gmail.com}

	\subjclass[2010]{47A55, 47A56, 47A13, 47B10}
	
	\keywords{Spectral shift function; Trace formula; Perturbations; Trace class; Schatten $n$-class}
	\begin{abstract}
		In \cite{Mor}, Marcantognini and Mor\'{a}n
		obtained Koplienko-Neidhardt trace formula for pairs of contractions and pairs of maximal dissipative operators via multiplicative path. In this article, we prove the existence of higher-order spectral shift functions for pairs of contractions and pairs of maximal dissipative operators via multiplicative path by adapting the argument employed in \cite{Mor}.
	\end{abstract}
	\maketitle

	\section{Introduction}
	The spectral shift function (SSF)
has become a fundamental object in perturbation theory. The notion of first order spectral shift function originated from the work of the Russian physicist I.M. Lifshits' on theoretical physics \cite{Lif}, followed by  M.G. Krein in \cite{Kr53, Kr83}, in which it was shown that for a pair
	of self-adjoint (not necessarily bounded) operators $H$ and $H_0$ satisfying $H-H_0\in \mathcal{B}_1(\mathscr{H})$ (set of trace class operators on a separable Hilbert space $\mathscr{H}$), there exists a unique real-valued $L^1(\mathbb{R})$- function $\xi$  such that 
	\begin{equation}\label{inteq1}
		\text{Tr}~\{\phi(H)-\phi(H_0)\} = \int_{\mathbb{R}} \phi'(\lambda)~\xi(\lambda)~d\lambda,
	\end{equation}
	for a large class of functions $\phi$. The function $\xi$ is known as Krein's spectral shift function and the relation \eqref{inteq1} is called Krein's trace formula. A similar result was obtained by Krein in \cite{Kr62} for pair of unitary operators $\big\{U,U_0\big\}$ such that $U-U_0\in \mathcal{B}_1(\mathscr{H})$. For each such pair there exists a real valued $L^1([0,2\pi])$- function $\xi$, unique modulo an additive constant, such that
	\begin{equation}\label{intequ2}
		\text{Tr}~\big\{\phi(U)-\phi(U_0)\big\} = \int_{0}^{2\pi} \frac{d}{dt}\big\{\phi(\textup{e}^{it})\big\}~\xi(t)~dt,
	\end{equation}
	where $\phi'$ has absolutely convergent Fourier series. The original proof of Krein uses analytic function theory and for various alternative proofs of the formula \eqref{inteq1} and \eqref{intequ2} we refer to \cite{BS1,BS2,MoSi94,MoSi96,Voi}. Moreover, for a description of a wider class of functions for which formulae \eqref{inteq1} and \eqref{intequ2} hold we refer to \cite{AP16,Pe16}. For a pair of contractions $T_1$, $T_0$ with $T_1-T_0$ is trace-class, Neidhardt \cite{N,  NTh} initiated the study of trace formula, to be followed by others in \cite{AdNei,ChSiCont,MNP19}.
	In this connection, it is worthwhile to mention that a series of papers by Rybkin \cite{Ry1,Ry2,Ry3,Ry4}, where an analogous extension of \eqref{inteq1} and \eqref{intequ2} in case of contractions was also achieved.
	
	The modified second-order spectral shift function in the case of non-trace class perturbations was introduced by Koplienko in \cite{Ko}. Let $H$ and $H_0$ be two self-adjoint operators in a separable Hilbert space $\mathscr{H}$ such that $H-H_0=V\in \mathcal{B}_2(\mathscr{H})$ (set of Hilbert-Schmidt operators on $\mathscr{H}$). In this case, the difference $\phi(H)-\phi(H_0)$ is no longer of trace-class, and one has to consider instead
	$
	\phi(H)-\phi(H_0)-\at{\dfrac{d}{ds}\Big(\phi(H_0+sV)\Big)}{s=0},
	$
	where $\at{\dfrac{d}{ds}\Big(\phi(H_0+sV)\Big)}{s=0}$ denotes the G$\hat{a}$teaux derivative of $\phi$ at $H_0$ in the direction $V$ (see \cite{RB}) and find a trace formula for the above expression under certain assumptions on $\phi$. Under this hypothesis, Koplienko's formula asserts that there exists a unique function $\eta \in L^1(\mathbb{R})$ such that
	\begin{equation}\label{intequ3}
		\operatorname{Tr}\Big\{\phi(H)-\phi(H_0)-\at{\dfrac{d}{ds}\Big(\phi(H_0+sV)\Big)}{s=0}\Big\}=\int_{\mathbb{R}} \phi''(\lambda)~\eta(\lambda)~d\lambda
	\end{equation}
	for rational functions $\phi$ with poles off $\mathbb{R}$.
	The function $\eta$ is known as Koplienko spectral shift function corresponding to the pair $(H_0,H)$. For more on Koplienko trace formula we refer to \cite{ChSi,DS,GePu, SK10} and the references cited therein. In this connection, it is worth mentioning for a higher order version of \eqref{intequ3} we refer to \cite{PoSkSu13In}.

	A similar problem for unitary operators was considered by Neidhardt \cite{NH}. Let $U$ and $U_0$ be two unitary operators on a separable Hilbert space $\hil$ such that $U-U_0~\in~\mathcal{B}_2(\hil)$. Then  $U=e^{iA}U_0$, where $A$ is a self-adjoint operator in $\mathcal{B}_2(\hil)$. Set $U_s=e^{isA}U_0,~s\in \mathbb{R}$. Then it was shown in \cite{NH} that there exists an $L^1([0,2\pi]))$-function $\eta$ (unique upto an additive constant) such that 
	\begin{equation}\label{intequ4}
		\operatorname{Tr}\Big\{\phi(U)-\phi(U_0)-\at{\dfrac{d}{ds}\big\{\phi(U_s)\big\}}{s=0}\Big\}=\int_{0}^{2\pi} \dfrac{d^2}{dt^2} \big\{\phi (e^{it})\big\} \eta(t) dt,
	\end{equation}
	where $\phi''$ has absolutely convergent Fourier series.   It is important to note that the path considered by Neidhard \cite{NH} is a unitary path (or sometimes it is known as multiplicative path), that is  $U_s=e^{isA}U_0$ is an unitary operator for each $s\in\mathbb{R}$. Later, in \cite[Sect.10]{GePu}, Gesztesy, Pushnitski and Simon have discussed an alternative to Neidhardt's approach. In other words they have considered the linear path $U_0+ s(U-U_0);~0\leq s\leq 1$  and proved that there exists a real distribution $\eta$ on the unit circle $\mathbb{T}$ so that the formula \eqref{intequ4} holds 
	for every complex polynomial and also posed an open question \cite[Open Question 11.2]{GePu} whether the distribution $\eta$ becomes an $L^1(\mathbb{T})$-function or not? In 2012, Potapov and Sukochev gave an affirmative answer of the question in \cite{PoSu} and proved that
	given a pair of contractions $(T_1,T_0)$ in $\mathscr{H}$ such that $T_1-T_0\in \mathcal{B}_2(\mathscr{H})$, then there exists an $L^1(\mathbb{T})$-function $\eta$  (unique up to an analytic term) such that
	\begin{align}\label{Potasukoeq}
		\operatorname{Tr}\Bigg\{p(T)-p(T_0)-\at{\dds}{s=0} \big\{p(T_s)\big\} \Bigg\}=\int_{\cir}
		p''(z)\eta(z)dz,
	\end{align}
where $T_s=T_0+s(T_1-T_0),~s\in [0,1]$ and $p(\cdot)$ is any complex polynomial. Moreover, for a description of a wider class of functions for which formulae \eqref{intequ3} and \eqref{intequ4} hold we refer to \cite{Pe05}. Furthermore, for a higher order version of \eqref{intequ4} and \eqref{Potasukoeq} and an alternative proof via finite dimensional approximations of the formulae \eqref{intequ4} and \eqref{Potasukoeq} we refer to \cite{PoSkSu, PoSkSu16, Sk,ST} and \cite{CDP,CDP1} respectively. 	

In this direction of studies, Marcantognini and Mor\'{a}n 
obtained the Koplienko-Neidhardt trace formula for pairs of contraction operators and pairs of maximal dissipative operators via multiplicative path in \cite{Mor}. The aim of the present article is to prove a higher order version of  the Koplienko-Neidhardt trace formula for pairs of contraction operators and pairs of maximal dissipative operators via multiplicative path. We follow the same strategy as mentioned in \cite{Mor} with an appropriate modification. In other words, first we consider a pair $(T,V)$, where $V$ is a unitary operator and $T$ is a contraction on $\mathscr{H}$ and then we prove the higher order version of the Koplienko-Neidhardt trace formula via multiplicative path corresponding to the pair $(T,V)$ under some additional hypotheses  (see Theorem~\ref{thconuni}) by applying the existing Theorem~\ref{th1} below and using dilation theory.
Later, we prove the higher order version of the Koplienko-Neidhardt trace formula via multiplicative path for pairs of contrations $(T_1,T_0)$ (see Theorem~\ref{thconcon}) using our obtain Theorem~\ref{thconuni} in section 3. Finally, as an application of our main Theorem~\ref{thconcon} for pairs of contractions we obtain the higher order analogue of the Koplienko-Neidhardt trace formula via multiplicative path for pairs of maximal dissipative operators. 

The rest of the paper is organized as follows: Section 2 deals with some preliminaries that are essential in the later sections. In Section 3, we prove the higher order analogue of the Koplienko-Neidhard trace formula for a pair $(T,V)$, where $V$ is a unitary operator and $T$ is a contraction on $\mathscr{H}$. Section 4 is devoted to obtain the higher order version of the Koplienko-Neidhard trace formula for pairs of contractions. Consequently, in Section 5, we prove the trace formula for pairs of maximal dissipative operators.

	\section{Preliminaries}
	{\textbf{Notations:}} Here, $\mathscr{H}$ will denote the separable infinite dimensional Hilbert space we work in; $\mathcal{B}(\mathscr{H})$, $\mathcal{B}_1(\mathscr{H})$, $\mathcal{B}_2(\mathscr{H})$, $\mathcal{B}_n(\mathscr{H})$ the set of bounded, trace class, Hilbert-Schmidt class, Schatten-n class operators
	in $\mathscr{H}$ respectively with $\|\cdot\|$, $\|\cdot\|_1$, $\|\cdot\|_2$, $\|\cdot\|_n$ as the associated norms. Given $T\in \mathcal{B}(\mathscr{H})$, we denote its kernel by $\textup{Ker}(T)$, its range by $\textup{Ran}(T)$ and
	its spectrum by $\sigma(T)$, and let $\textup{Dom}(A)$, $\text{Tr}(A)$ be the domain
	of the operator $A$ and the trace of a trace class operator $A$ respectively. Also, $\mathbb{N}$, $\mathbb{Z}$, $\mathbb{R}$, and $\mathbb{C}$ denote the collection of natural, integer, real and complex numbers respectively. Furthermore, $\mathbb{D}$ stands for the open unit disk in the complex plane $\mathbb{C}$ and $\mathbb{T}$ for the unit circle in $\mathbb{C}$, hence $\mathbb{D}:= \big\{|z|<1,~z\in \mathbb{C}\big\}$ and $\mathbb{T}:= \big\{|z|=1,~z\in \mathbb{C}\big\}$. Further, given a closed subspace $\mathcal{M}$ of $\mathscr{H}$, $P_{\mathcal{M}}$ denotes the orthogonal projection of $\mathscr{H}$ onto $\mathcal{M}$. 
	
	Recall that $\mathscr{H}$-valued Hardy space over the unit disc $\mathbb{D}$ in $\mathbb{C}$ is denoted by $	\mathbf{H}^2_{\mathscr{H}}(\mathbb{D})$ and defined by 
	\begin{equation}
		\mathbf{H}^2_{\mathscr{H}}(\mathbb{D}):=\Big\{f(z)=\sum_{k=0}^{\infty}a_kz^k:~\|f\|_{\mathbf{H}^2_{\mathscr{H}}(\mathbb{D})}^2:=\sum_{k=0}^{\infty}\|a_k\|_{\mathscr{H}}^2<\infty,~z\in \mathbb{D},~a_k\in \mathscr{H}\Big\}.
	\end{equation}
Recall that the shift operator on the Hardy space $	\mathbf{H}^2_{\mathscr{H}}(\mathbb{D})$ is denoted by $S_{\mathscr{H}}$ and  is defined by 
$(S_{\mathscr{H}}f)(z):=zf(z),~f\in \mathbf{H}^2_{\mathscr{H}}(\mathbb{D}),~z\in \mathbb{D}$. It is easy to check that $S_{\mathscr{H}}$ is an isometry on $\mathbf{H}^2_{\mathscr{H}}(\mathbb{D})$ and $S_{\mathscr{H}}S_{\mathscr{H}}^*=I-P_{\mathscr{H}}$, where $P_{\mathscr{H}}$ is the orthogonal projection of $\mathbf{H}^2_{\mathscr{H}}(\mathbb{D})$ onto $\mathscr{H}$ (that is, by identifying $\mathscr{H}$ as $\mathscr{H}$-valued constant functions). For more on vector valued Hardy space we refer to \cite{Nikol,Parting}.

Let $T\in \mathcal{B}(\mathscr{H})$ be a contraction, that is $\|T\|\leq 1$. Then the defect operator of $T$ is denoted by $D_T$ and defined by $D_T:=(1-T^*T)^{1/2}$. Moreover, $\mathcal{D}_T:=\overline{\textup{Ran}(D_T)}$ is known as the corresponding defect space of $T$. Recall that the minimal unitary dilation of a contraction $T$ is a unitary operator $U_T:\mathscr{F}=\mathbf{H}^2_{\mathcal{D}_{T^*}}(\mathbb{D})\oplus\mathscr{H}\oplus\mathbf{H}^2_{\mathcal{D}_{T}}(\mathbb{D})\to\HDTs\oplus\mathscr{H}\oplus\HDT$
such that $T^n=P_{\mathscr{H}}U_T^n|_{\mathscr{H}}$ and $T^{*^n}=P_{\mathscr{H}}U_T^{-n}|_{\mathscr{H}}$
for $n\in \mathbb{N}$, and $\mathscr{F}$ is the smallest Hilbert space containing the subspaces $U_T^n\mathscr{H}$ for all $n\in \mathbb{Z}$. Furthermore, the block matrix representation of $U_T$ is as follows:
\begin{align}\label{dilationmat}
	U_T=\begin{bmatrix}
		S_{\DTs}^*&0&0\\
		D_{T^*}P_{\DTs}&T&0\\
		-T^*P_{\DTs}&D_{T}&S_{\mathcal{D}_T}
	\end{bmatrix}:\begin{bmatrix}
	\HDTs\\
	\mathscr{H}\\
	\HDT
\end{bmatrix}\longrightarrow \begin{bmatrix}
\HDTs\\
\mathscr{H}\\
\HDT
\end{bmatrix},
\end{align}
 where $S_{\mathcal{D}_T}$ and $S_{\mathcal{D}_{T^*}}$ are the shift operator on $\HDT$ and $\HDTs$ respectively and $P_{\DTs}$ is the orthogonal projection from $\mathscr{F}$ onto $\DTs\oplus {\textbf{0}}\oplus {\textbf{0}}\equiv \DTs$. For more on dilation theory we refer to \cite{NzFo}.

Next we introduce the set of functions $\mathcal{F}_n(\mathbb{T})$ for $n\in \mathbb{N}\cup \{0\}$:
\begin{equation}
\mathcal{F}_n(\mathbb{T}):=\Big\{f(z)=\sum_{k=-\infty}^{\infty}\hat{f}(k)z^k\in C^n(\mathbb{T}):\sum_{k=-\infty}^{\infty}|k|^n|\hat{f}(k)|<\infty \Big\},
\end{equation}
where $\big\{\hat{f}(k):~k\in \mathbb{Z}\big\}$ is the Fourier coefficients of $f$ and $C^n(\mathbb{T})$ is the collection of all $n$-times continuously differentiable functions on $\mathbb{T}$ and $f^{(n)}$ denotes the $n$-th order derivative of the function $f\in C^n(\mathbb{T})$. In particular for $n=0$, that is $\mathcal{F}_0(\mathbb{T})$ is known as the Wiener class on $\mathbb{T}$. Going further, we need the following well-known theorem due to A. Skripka (see \cite[Theorem 4.4]{Sk})
which gives the existence of higher order spectral shift for pairs of unitaries via multiplicative path to achieve our main results in later sections.

\begin{thm}(See \cite[Theorem 4.4]{Sk})\label{th1}
		Let $n\in\mathbb{N},~n\geq 2$. Let $U_0$ be a unitary operator, $A=A^*\in\bnh$ and denote $U_t=e^{itA}U_0,~t\in [0,1]$. Then for any $\phi\in\mathcal{F}_n(\mathbb{T})$,
		\begin{equation*}
			\left\{\phi(U_1)-\phi(U_0)-\sum_{k=1}^{n-1}\dfrac{1}{k!}\dfrac{d^k}{ds^k}\Big|_{s=0}\phi(U_s) \right\}\in \boh,
		\end{equation*}
		and there exists a constant $c_n$ and an $L^1([0, 2\pi])$-function $\eta_n=\eta_{n,U_0,A}$ (unique up to an additive constant) satisfying $\|\eta_n\|_1\leq c_n\|A\|_n^n$ such that 
		\begin{align*}
			\textup{Tr}\left\{\phi(U_1)-\phi(U_0)-\sum_{k=1}^{n-1}\dfrac{1}{k!}\dfrac{d^k}{ds^k}\Big|_{s=0}\phi(U_s) \right\}=\int_{0}^{2\pi}\dfrac{d^n}{dt^n}\big\{\phi(e^{it})\big\}\eta_n(t)dt.
		\end{align*}
	\end{thm}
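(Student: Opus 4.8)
The plan is to prove the statement in three stages --- a reduction to monomials, the trace-class assertion via a Duhamel expansion, and the construction of $\eta_n$ by representing a suitable linear functional --- and to isolate the uniform $L^1$-bound as the genuinely hard point. First, since any $\phi\in\mathcal{F}_n(\mathbb{T})$ has an expansion $\phi(z)=\sum_{k}\hat f(k)z^k$ with $\sum_k|k|^n|\hat f(k)|<\infty$, and since both the operator expression and the right-hand side depend linearly on $\phi$, I would reduce to monomials $\phi(z)=z^m$. For unitaries $z^{-|m|}$ corresponds to $(U^*)^{|m|}$, so it suffices to treat $m\geq 0$, and the weight $|k|^n$ is precisely what is needed to sum the monomial estimates back up after the fact.

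Second, for the trace-class claim I would exploit the multiplicative structure. Since $\frac{d}{ds}U_s=iAU_s$, one gets $\frac{d}{ds}U_s^m=i\sum_{j=0}^{m-1}U_s^{j}AU_s^{m-j}$, and iterating shows that the $k$-th derivative of $U_s^m$ is a finite sum of words $U_s^{j_0}AU_s^{j_1}A\cdots AU_s^{j_k}$ containing exactly $k$ factors of $A$. Taylor's theorem with integral remainder then gives
\[
\phi(U_1)-\phi(U_0)-\sum_{k=1}^{n-1}\frac{1}{k!}\frac{d^k}{ds^k}\Big|_{s=0}\phi(U_s)=\frac{1}{(n-1)!}\int_0^1(1-s)^{n-1}\frac{d^n}{ds^n}\phi(U_s)\,ds,
\]
whose integrand is a finite sum of words with exactly $n$ factors of $A$. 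Because $A\in\mathcal{B}_n(\mathscr{H})$ and the remaining factors are unitary, Hölder's inequality for Schatten norms, $\|A_1\cdots A_n\|_1\leq\prod_j\|A_j\|_n$, places each word in $\mathcal{B}_1(\mathscr{H})$ with norm controlled by $\|A\|_n^n$. This yields both the trace-class assertion and a bound $\|R_n(\phi)\|_1\leq C(m,n)\,\|A\|_n^n$ for monomials.

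Third, I would introduce the functional $\phi\mapsto\mathrm{Tr}\,R_n(\phi)$ and show it factors through $\phi\mapsto\phi^{(n)}$. Taking the trace of the simplex integral above and using cyclicity together with the spectral measure of $U_s$, the trace of each word reduces to an integral of a divided-difference kernel of $\phi$; the combinatorial factors produced by the $n$ insertions of $A$ and the simplex integration should combine to the factor $(im)^n=\frac{d^n}{dt^n}e^{imt}$ in such a way that the representing object is independent of $m$. The target estimate is therefore $|\mathrm{Tr}\,R_n(\phi)|\leq c_n\|A\|_n^n\,\|\phi^{(n)}\|$, exhibiting the functional as a bounded functional of $\phi^{(n)}$; a Riesz-type representation then produces the density $\eta_n$, and its uniqueness up to an additive constant follows from $\frac{d^n}{dt^n}e^{ikt}=(ik)^n e^{ikt}$, which vanishes only for $k=0$, so that the $n$-th derivatives span the zero-mean trigonometric polynomials and any $L^1$ function pairing to zero with all of them must be constant.

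The hard part will be passing from a representing \emph{measure} to an absolutely continuous $\eta_n\in L^1([0,2\pi])$ together with the quantitative bound $\|\eta_n\|_1\leq c_n\|A\|_n^n$ that is \emph{uniform} in the perturbation. A sup-norm bound on $\phi^{(n)}$ only delivers a finite measure via Riesz--Markov; upgrading this requires the finite-dimensional approximation step, where $A$ is replaced by finite-rank self-adjoint operators, $\eta_n$ is written explicitly as a symmetrized combination of eigenvalue data, and these densities are shown to have $L^1$ norms bounded uniformly by $\|A\|_n^n$ before one passes to the $\mathcal{B}_n(\mathscr{H})$ limit by weak-$*$ compactness. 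Controlling those densities --- essentially a multiple-operator-integral estimate of $A\mapsto\mathrm{Tr}\,R_n(\phi)$ in the Schatten norm $\|\cdot\|_n$ --- is where the technical weight of the argument lies, the reduction, the Duhamel expansion, and the uniqueness being comparatively routine.
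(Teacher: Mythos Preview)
The paper does not prove this theorem at all: it is quoted verbatim as \cite[Theorem 4.4]{Sk} (Skripka) and invoked as a black box to feed the dilation arguments of Sections~3--5. There is therefore no ``paper's own proof'' to compare your proposal against.

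That said, your outline is a reasonable sketch of the strategy behind the cited result. The reduction to monomials, the Duhamel/Taylor remainder with $n$ factors of $A$, and H\"older in Schatten norms are standard and give the trace-class statement with the crude bound $|\mathrm{Tr}\,R_n(\phi)|\le C_n\|A\|_n^n\sum_k |k|^n|\hat\phi(k)|$. You have also correctly located the genuine difficulty: upgrading the representing measure to an $L^1$ density with $\|\eta_n\|_1\le c_n\|A\|_n^n$. One caveat in your sketch is that the bound you state, $|\mathrm{Tr}\,R_n(\phi)|\le c_n\|A\|_n^n\|\phi^{(n)}\|$, with the sup norm on the right would only yield a measure; the actual argument in the literature does not go through Riesz--Markov on $C(\mathbb{T})$ but rather produces $\eta_n$ directly (via finite-rank approximation and explicit divided-difference/multiple-operator-integral formulas) and then extracts a weak-$*$ limit in $L^1$, exactly as you indicate in your final paragraph. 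So your plan is on target, but the middle step ``Riesz-type representation then produces the density $\eta_n$'' is not how the $L^1$ density is obtained --- the density comes first, from the approximants, and the functional identity is verified afterwards.
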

	Let $\phi\in\mathcal{F}_n(\mathbb{T})$ be such that $\phi(e^{it})=\sum\limits_{k=-\infty}^{\infty}\hat{\phi}(k)e^{ikt}$. Next we introduce the functions, namely  $\phi_{+}(e^{it})=\sum\limits_{k=0}^{\infty}\hat{\phi}(k)e^{ikt}$ and $\phi_{-}(e^{it})=\sum\limits_{k=1}^{\infty}\hat{\phi}(-k)e^{ikt}$. Then $\phi(e^{it})=\phi_{+}(e^{it})+\phi_{-}(e^{-it})$ and $\phi_{\pm}\in \mathcal{F}_n(\mathbb{T})$. Now for a given contraction $T$ on $\mathscr{H}$, we set
		\begin{align}\label{INTeq1}
			\phi_{+}(T)=\sum\limits_{k=0}^{\infty}\hat{\phi}(k)T^k,~   \phi_{-}(T)=\sum\limits_{k=1}^{\infty}\hat{\phi}(-k){T^*}^k, \text{ and } \phi(T)=\phi_{+}(T)+\phi_{-}(T).
	\end{align}
	
	\section{Higher order Trace formula for pair of contractive operators with one of them is unitary}\label{conuni}
	In this section, we prove the higher order version of the Koplienko-Neidhardt trace formula via multiplicative path for a pair $(T,V)$, where $T$ is a contraction and $V$ is a unitary operator on $\mathscr{H}$ such that $T-V\in \mathcal{B}_n(\mathscr{H})$. 
	To proceed further, we need the following auxiliary lemma towards  obtaining our main result in this section. Note that the Lemma~\ref{lm1} below is available in \cite{ST} (see Theorem 5.3.4) in the case when $U$ is an unitary operator and the expression of the $k$-th order G$\hat{a}$teaux derivative of $f(U_t)$ is given in terms of multiple operator integral, where $f$ belongs to the Besov space. But for reader convenience we are providing a detailed proof herewith. On the other hand it is important to note that in our setting we consider {\emph{$U$ simply an element of $\mathcal{B}(\mathscr{H})$ instead of  unitary operator}} and we need the expression of the $k$-th order G$\hat{a}$teaux derivative of $p(U_t)$, where $p$ is any monomial. For that, it is essential to use the definition of the G$\hat{a}$teaux derivative (with convergence in the operator norm) in comparison to the idea of multiple operator integral used in \cite{ST} to get the expression of the $k$-th order G$\hat{a}$teaux derivative of $p(U_t)$ but at the same time to obtain the precise coefficients in the expression we are using the same idea as given in the proof of Theorem 5.3.4 in \cite{ST}.

\begin{lma}\label{lm1}
	Let $p(z)=z^n,~z\in \mathbb{T}$ and $n\in \mathbb{N}$, let $A\in \mathcal{B}(\mathscr{H})$ be a self-adjoint operator and let $U\in \mathcal{B}(\mathscr{H})$. Set $U_t=e^{itA}U,~t\in \mathbb{R}$. Then for all $1\leq k\leq n-1$, we have
	
	\begin{align}\label{eq2}
		\dfrac{d^k}{dt^k}\Big|_{t=s}\big\{U_t^n\big\}=\sum_{r=1}^{k}~\sum_{\substack{l_1+l_2+\cdots+l_{r}=k\\l_1,l_2,\ldots,l_{r}\geq 1}}~\dfrac{k!}{l_1!\cdots l_r!}\Bigg[\sum_{\substack{\alpha_0+\alpha_1+\cdots+\alpha_r=n-r\\\alpha_0,\alpha_1,\ldots,\alpha_r\geq 0}}U_s^{\alpha_0}W_s^{l_1}U_s^{\alpha_1}\cdots W_s^{l_{r}}U_s^{\alpha_r}\Bigg],
	\end{align} 
where $W_s^l=\Big((iA)^le^{isA}U\Big)$, $l\in\mathbb{N}$.
	\end{lma}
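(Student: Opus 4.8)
The plan is to treat $U_t^n=(e^{itA}U)^n$ as an ordered product of $n$ identical operator-valued factors and to apply the noncommutative Leibniz rule, followed by a purely combinatorial regrouping of the resulting multi-index sum. Since $A\in\mathcal{B}(\mathscr{H})$ is bounded, the map $t\mapsto e^{itA}$ is analytic in the operator norm with $\frac{d^m}{dt^m}e^{itA}=(iA)^m e^{itA}$; hence each factor $g(t):=e^{itA}U$ is smooth in $t$ (all derivatives converging in operator norm), and so is the product $U_t^n=g(t)g(t)\cdots g(t)$. This justifies differentiating term by term and legitimizes the G\^{a}teaux derivative computation in the operator norm, exactly as flagged in the remark preceding the lemma.

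First I would record the generalized Leibniz rule for an ordered product of $n$ smooth operator-valued functions $g_1,\dots,g_n$, namely
\begin{align*}
\frac{d^k}{dt^k}\big(g_1g_2\cdots g_n\big)=\sum_{\substack{m_1,\dots,m_n\geq 0\\ m_1+\cdots+m_n=k}}\frac{k!}{m_1!\cdots m_n!}\,g_1^{(m_1)}g_2^{(m_2)}\cdots g_n^{(m_n)},
\end{align*}
which holds verbatim in the noncommutative setting (by induction on $n$) provided the order of the factors is preserved. Specializing to $g_j=g$ for every $j$ and using $g^{(m)}(t)=(iA)^m e^{itA}U$, evaluation at $t=s$ gives $g^{(m)}(s)=U_s$ when $m=0$ and $g^{(m)}(s)=W_s^{m}$ when $m\geq 1$. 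Thus
\begin{align*}
\frac{d^k}{dt^k}\Big|_{t=s}U_t^n=\sum_{\substack{m_1,\dots,m_n\geq 0\\ m_1+\cdots+m_n=k}}\frac{k!}{m_1!\cdots m_n!}\,h_1h_2\cdots h_n,
\end{align*}
where $h_j=U_s$ if $m_j=0$ and $h_j=W_s^{m_j}$ if $m_j\geq 1$.

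The remaining step, which is the only real bookkeeping, is to reindex this multi-index sum into the stated form. For a multi-index $(m_1,\dots,m_n)$ with $\sum_j m_j=k$, let $r$ be the number of indices with $m_j\geq 1$; since each such $m_j\geq 1$ and they sum to $k$ we have $1\leq r\leq k$. Writing the nonzero positions as $1\leq i_1<\cdots<i_r\leq n$ with values $l_q:=m_{i_q}\geq 1$ (so $l_1+\cdots+l_r=k$) and setting $\alpha_0:=i_1-1$, $\alpha_q:=i_{q+1}-i_q-1$ for $1\leq q\leq r-1$, and $\alpha_r:=n-i_r$, the product $h_1\cdots h_n$ evaluated at $s$ becomes exactly $U_s^{\alpha_0}W_s^{l_1}U_s^{\alpha_1}\cdots W_s^{l_r}U_s^{\alpha_r}$, while $\alpha_0+\cdots+\alpha_r=n-r$ with each $\alpha_q\geq 0$. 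This correspondence $(m_1,\dots,m_n)\leftrightarrow\big(r,(l_q)_q,(\alpha_q)_q\big)$ is a bijection, and since the zero entries contribute factors $0!=1$ the multinomial coefficient collapses to $\frac{k!}{l_1!\cdots l_r!}$. Summing first over $r$, then over the compositions $l_1+\cdots+l_r=k$ with $l_q\geq 1$, and finally over the gap vectors $\alpha_0+\cdots+\alpha_r=n-r$ with $\alpha_q\geq 0$ reproduces \eqref{eq2} verbatim; the hypothesis $k\leq n-1$ merely guarantees $n-r\geq 1$ so that the block constraints are nonempty. One could equally run an induction on $k$, differentiating the right-hand side of \eqref{eq2} and using $\frac{d}{dt}W_t^{l}=W_t^{l+1}$ together with the product rule on each $U_t^{\alpha_q}$ block; this is the route closest to the proof of Theorem 5.3.4 in \cite{ST}, but it forces one to track how differentiating a $U_t$-block raises $r$ and how differentiating a $W_t^{l}$-factor raises a single $l_q$, which is precisely the combinatorics already encoded in the bijection above. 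I expect the bijection and the coefficient collapse to be the main point, though it is entirely elementary; the analytic input is harmless since $A$ is bounded.
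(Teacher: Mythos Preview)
Your argument is correct. Writing $U_t^n=g(t)\cdots g(t)$ with $g(t)=e^{itA}U$, applying the noncommutative Leibniz/multinomial rule, and then reindexing by the pattern of zero versus nonzero exponents is a clean and complete proof of \eqref{eq2}; the bijection you describe between multi-indices $(m_1,\dots,m_n)$ and triples $\big(r,(l_q),(\alpha_q)\big)$ is exactly right, and the coefficient collapse $\frac{k!}{m_1!\cdots m_n!}=\frac{k!}{l_1!\cdots l_r!}$ is immediate since the vanishing $m_j$'s contribute $0!=1$.

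The paper, however, does not argue this way. It proceeds by induction on $k$: the base cases $k=1,2$ are computed directly from the definition of the G\^{a}teaux derivative, and then the inductive step differentiates the right-hand side of \eqref{eq2} once more, splitting into contributions where the extra derivative lands on a $W_s^{l_j}$-factor (raising $l_j$ by one) versus on one of the $U_s^{\alpha_j}$-blocks (increasing $r$ by one). The two resulting families of terms are then reassembled via the identity $\sum_{j=1}^{r}\frac{q!}{l_1!\cdots(l_j-1)!\cdots l_r!}=\frac{(q+1)!}{l_1!\cdots l_r!}$. This is precisely the inductive route you allude to at the end of your proposal as the one ``closest to the proof of Theorem 5.3.4 in \cite{ST}''; the paper in fact follows that route in full detail. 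Your approach is shorter and more conceptual, packaging all of the inductive combinatorics into the single Leibniz identity and a transparent change of summation variable; the paper's approach has the virtue of being self-contained (it does not invoke the $n$-fold Leibniz rule as a black box) but pays for this with a substantially longer bookkeeping calculation.

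One small remark: your closing comment that the hypothesis $k\le n-1$ ``merely guarantees $n-r\ge 1$'' slightly undersells the point. The bijection and the formula remain valid even when $n-r=0$ (all $\alpha_q$ vanish) or formally when $k\ge n$ (terms with $r>n$ are empty sums); the restriction $k\le n-1$ in the statement is simply the range needed in the applications downstream, not a genuine constraint on your argument.
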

	\begin{proof}
		We prove the lemma by applying mathematical induction on $k$. For $k=1$, using the definition of G$\hat{a}$teaux derivative we get
		\begin{align*}
			\dfrac{d}{dt}\Big|_{t=s}\big\{U_t^n\big\}=&\lim_{h\to 0} \dfrac{U_{s+h}^n-U_s^n}{h}=\lim_{h\to 0}~\sum_{j=0}^{n-1}U_{s+h}^{n-j-1}\left(\dfrac{U_{s+h}-U_s}{h}\right)U_s^j\\
			=&\sum_{j=0}^{n-1}U_{s}^{n-j-1}\left(iAe^{isA}U\right)U_s^j=\sum_{\substack{\alpha_0+\alpha_1=n-1\\ \alpha_0,\alpha_1\geq 0}}U_{s}^{\alpha_0}W_s^1U_s^{\alpha_1}.
		\end{align*}

	Similarly for $k=2$, again by using the definition of G$\hat{a}$teaux derivative we have
		\begin{align*}
			\dfrac{d^2}{dt^2}\Big|_{t=s}\big\{U_t^n\big\}=&\sum_{\substack{\alpha_0+\alpha_1=n-1\\ \alpha_0\geq 1~\&~\alpha_1\geq 0}}\sum_{\substack{\beta_0+\beta_1=\alpha_0-1\\ \beta_0,\beta_1\geq 0}}U_s^{\beta_0}W_s^1U_s^{\beta_1}W_s^1U_s^{\alpha_1} \\
			& + \sum_{\substack{\alpha_0+\alpha_1=n-1\\ \alpha_1\geq 1~\&~\alpha_0\geq 0}}\sum_{\substack{\beta_0+\beta_1=\alpha_1-1\\ \beta_0,\beta_1\geq 0}}U_s^{\alpha_0}W_s^1U_s^{\beta_0}W_s^1U_s^{\beta_1}
			+\sum_{\substack{\alpha_0+\alpha_1=n-1\\ \alpha_0,\alpha_1\geq 0}}U_s^{\alpha_0}W_s^2U_s^{\alpha_1}\\	
			=&2!~\sum_{\substack{\alpha_0+\alpha_1+\alpha_2=n-2\\ \alpha_0,\alpha_1,\alpha_2\geq 0}}U_s^{\alpha_0}W_s^1U_s^{\alpha_1}W_s^1U_s^{\alpha_2} +\sum_{\substack{\alpha_0+\alpha_1=n-1\\ \alpha_0,\alpha_1\geq 0}}U_s^{\alpha_0}W_s^2U_s^{\alpha_1}\\
			=& \sum_{r=1}^{2}~\sum_{\substack{l_1+l_2+\cdots+l_{r}=2\\l_1,l_2,\ldots,l_{r}\geq 1}}~\dfrac{2!}{l_1!\cdots l_r!}\Bigg[\sum_{\substack{\alpha_0+\alpha_1+\cdots+\alpha_r=n-r\\\alpha_0,\alpha_1,\ldots,\alpha_r\geq 0}}U_s^{\alpha_0}W_s^{l_1}U_s^{\alpha_1}\cdots W_s^{l_{r}}U_s^{\alpha_r}\Bigg].
		\end{align*}
  Therefore the result is true for $k=1,2$. Now we assume that the result holds for $k=q< n-1$, that is the equation \eqref{eq2} is true for $k=q$. Next we show that the equation \eqref{eq2} also holds for $k=q+1$. Now by applying Leibnitz rule for G$\hat{a}$teaux derivative and using induction hypothesis we get
  \begin{align*}
  	&	\dfrac{d^{q+1}}{dt^{q+1}}\Big|_{t=s}\big\{U_t^n\big\}\\
  	=&\sum_{r=1}^{q}~\sum_{\substack{l_1+\cdots+l_{r}=q\\l_1,l_2,\ldots,l_{r}\geq 1}}~\dfrac{q!}{l_1!\cdots l_r!}	\dfrac{d}{dt}\Big|_{t=s}\Bigg[\sum_{\substack{\alpha_0+\cdots+\alpha_r=n-r\\\alpha_0,\alpha_1,\ldots,\alpha_r\geq 0}}U_t^{\alpha_0}W_t^{l_1}U_t^{\alpha_1}\cdots W_t^{l_{r}}U_t^{\alpha_r}\Bigg]\\
  	%%%%%%%%%%%%%%%%%%%%%%%%%%
  	=&\sum_{r=1}^{q}~\sum_{\substack{l_1+\cdots+l_{r}=q\\l_1,\ldots,l_{r}\geq 1}}~\dfrac{q!}{l_1!\cdots l_r!}\\
  	&\hspace{.5in}\times\sum_{k=1}^{r}\Bigg[\sum_{\substack{\alpha_0+\cdots+\alpha_r=n-r\\\alpha_0,\ldots,\alpha_r\geq 0}}U_s^{\alpha_0}W_s^{l_1}U_s^{\alpha_1}\cdots W_s^{l_{k-1}}U_s^{\alpha_{k-1}}W_s^{l_k+1}U_s^{\alpha_k}W_s^{l_{k+1}}\cdots W_s^{l_{r}}U_s^{\alpha_r}\Bigg]\\
  	&+\sum_{r=1}^{q}~\sum_{\substack{l_1+\cdots+l_{r}=q\\l_1,\ldots,l_{r}\geq 1}}~\dfrac{q!}{l_1!\cdots l_r!}\\
  	&\times\sum_{k=1}^{r+1}\Bigg[\sum_{\substack{\alpha_0+\cdots+\alpha_{r+1}=n-(r+1)\\\alpha_0,\ldots,\alpha_{r+1}\geq 0}}U_s^{\alpha_0}W_s^{l_1}U_s^{\alpha_1}\cdots W_s^{l_{k-1}}U_s^{\alpha_{k-1}}W_s^1U_s^{\alpha_k} W_s^{l_{k}}U_s^{\alpha_{k+1}}\cdots W_s^{l_{r}}U_s^{\alpha_{r+1}}\Bigg]\\
  	=& K_1+K_2 \text{ (say)}.
  	%%%%%%%%%%%%%%%%%%%%%%%%%%%%%%%%
  \end{align*} 
  Now if we substitute $j_a=l_a, 1\leq a\neq k\leq r$ and $j_k=l_k+1$ in $K_1$, we obtain
  \begin{align*}
  	K_1=&\sum_{r=1}^{q}~\sum_{k=1}^{r}~\sum_{\substack{j_1+\cdots+j_{r}=q+1\\j_a\geq 1, a\neq k,j_k\geq 2}}~\dfrac{q!}{j_1!\cdots j_{k-1}!(j_k-1)!\cdots j_r!}\\
  	&\hspace{.5in}\times\Bigg[\sum_{\substack{\alpha_0+\cdots+\alpha_r=n-r\\\alpha_0,\ldots,\alpha_r\geq 0}}U_s^{\alpha_0}W_s^{j_1}U_s^{\alpha_1}\cdots W_s^{j_{k-1}}U_s^{\alpha_{k-1}}W_s^{j_k}U_s^{\alpha_k}W_s^{j_{k+1}}\cdots W_s^{j_{r}}U_s^{\alpha_r}\Bigg].
  \end{align*}
  On the other hand, by relabeling the summands of $K_2$ via $r\mapsto r-1$ and performing the substitution $j_a = l_a , 1\leq a\leq k-1,j_k=1$, and  $j_a = l_{a-1}, k+1\leq a\leq r$, we obtain
  \begin{align*}
  	K_2=&\sum_{r=2}^{q+1}~\sum_{\substack{l_1+\cdots+l_{r-1}=q\\l_1,\ldots,l_{r-1}\geq 1}}~\dfrac{q!}{l_1!\cdots l_{r-1}!}\\
  	&\times\sum_{k=1}^{r}\Bigg[\sum_{\substack{\alpha_0+\cdots+\alpha_{r}=n-r\\\alpha_0,\ldots,\alpha_{r}\geq 0}}U_s^{\alpha_0}W_s^{l_1}U_s^{\alpha_1}\cdots W_s^{l_{k-1}}U_s^{\alpha_{k-1}}W_s^1U_s^{\alpha_k} W_s^{l_{k}}U_s^{\alpha_{k+1}}\cdots W_s^{l_{r-1}}U_s^{\alpha_{r}}\Bigg]\\
  	=&\sum_{r=2}^{q+1}~\sum_{k=1}^{r}~\sum_{\substack{j_1+\cdots+j_{r}=q+1\\j_a\geq 1, j_k=1}}~\dfrac{q!}{j_1!\cdots j_{k-1}!\cdot j_{k+1}!\ldots j_{r}!}\Bigg[\sum_{\substack{\alpha_0+\cdots+\alpha_{r}=n-r\\\alpha_0,\ldots,\alpha_{r}\geq 0}}U_s^{\alpha_0}W_s^{j_1}\cdots W_s^{j_{r}}U_s^{\alpha_{r}}\Bigg].
  \end{align*}
  Thus,
  \begin{align*}
  	K_1+K_2=&\sum_{\substack{\alpha_0+\alpha_1=n-1\\\alpha_0,\alpha_1\geq 0}}U_s^{\alpha_0}W_s^{q+1}U_s^{\alpha_1}\\
  	&+\sum_{r=2}^{q}~\sum_{k=1}^{r}~\Bigg[\sum_{\substack{j_1+\cdots+j_{r}=q+1\\j_a\geq 1, a\neq k,j_k\geq 2}}+\sum_{\substack{j_1+\cdots+j_{r}=q+1\\j_a\geq 1, j_k=1}}\Bigg]\dfrac{q!}{j_1!\cdots j_{k-1}!(j_k-1)!\cdots j_r!}\\
  	&\hspace{1.2in}\times \Bigg[\sum_{\substack{\alpha_0+\cdots+\alpha_{r}=n-r\\\alpha_0,\ldots,\alpha_{r}\geq 0}}U_s^{\alpha_0}W_s^{j_1}\cdots W_s^{j_{r}}U_s^{\alpha_{r}}\Bigg]\\
  	&+\sum_{k=1}^{q+1}\sum_{\substack{j_1+\cdots+j_{q+1}=q+1\\j_a\geq 1, j_k=1}}\dfrac{q!}{j_1!\cdots j_{k-1}!(j_k-1)!\cdots j_{q+1}!}\\
  	&\hspace{1.2in}\times\Bigg[\sum_{\substack{\alpha_0+\cdots+\alpha_{q+1}=n-(q+1)\\\alpha_0,\ldots,\alpha_{q+1}\geq 0}}U_s^{\alpha_0}W_s^{j_1}\cdots W_s^{j_{q+1}}U_s^{\alpha_{q+1}}\Bigg]\\
  	%%%%%%%%%%%%%%%%%%%%%%%%%%%%%%%%%
  	=&\sum_{\substack{\alpha_0+\alpha_1=n-1\\\alpha_0,\alpha_1\geq 0}}U_s^{\alpha_0}W_s^{q+1}U_s^{\alpha_1}\\
  	&+\sum_{r=2}^{q}~\sum_{k=1}^{r}~\sum_{\substack{j_1+\cdots+j_{r}=q+1\\j_1\ldots j_r\geq 1}}\dfrac{q!}{j_1!\cdots j_{k-1}!(j_k-1)!j_{k+1}\cdots j_r!}\\
  	&\hspace{1.2in}\times\Bigg[\sum_{\substack{\alpha_0+\cdots+\alpha_{r}=n-r\\\alpha_0,\ldots,\alpha_{r}\geq 0}}U_s^{\alpha_0}W_s^{j_1}\cdots W_s^{j_{r}}U_s^{\alpha_{r}}\Bigg]\\
  	&+(q+1)!\sum_{\substack{\alpha_0+\cdots+\alpha_{q+1}=n-(q+1)\\\alpha_0,\ldots,\alpha_{q+1}\geq 0}}U_s^{\alpha_0}W_s^{1}\cdots W_s^{1}U_s^{\alpha_{q+1}}.
  \end{align*}
  Since 
  $$\sum_{k=1}^r\dfrac{q!}{j_1!\cdots j_{k-1}!(j_k-1)!j_{k+1}\cdots j_r!}=q!~\dfrac{(j_1+\cdots +j_r)}{j_1!\cdots j_r!}=\dfrac{(q+1)!}{j_1!\cdots j_r!}, $$ 
  it follows that 
  \begin{align*}
  	K_1+K_2=&\sum_{\substack{\alpha_0+\alpha_1=n-1\\\alpha_0,\alpha_1\geq 0}}U_s^{\alpha_0}W_s^{q+1}C_s^{\alpha_1}+\sum_{r=2}^{q}~\sum_{\substack{j_1+\cdots+j_{r}=q+1\\j_1\ldots j_r\geq 1}}\dfrac{(q+1)!}{j_1!\cdots j_r!}\\
  	&\hspace{2.2in}\times\Bigg[\sum_{\substack{\alpha_0+\cdots+\alpha_{r}=n-r\\\alpha_0,\ldots,\alpha_{r}\geq 0}}U_s^{\alpha_0}W_s^{j_1}\cdots W_s^{j_{r}}U_s^{\alpha_{r}}\Bigg]\\
  	&+(q+1)!\sum_{\substack{\alpha_0+\cdots+\alpha_{q+1}=n-(q+1)\\\alpha_0,\ldots,\alpha_{m+1}\geq 0}}U_s^{\alpha_0}W_s^{1}\cdots W_s^{1}U_s^{\alpha_{q+1}}\\
  	=&\sum_{r=1}^{q+1}~\sum_{\substack{j_1+\cdots+j_{r}=q+1\\j_1,\ldots,l_{r}\geq 1}}~\dfrac{(q+1)!}{j_1!\cdots j_r!}~\Bigg[\sum_{\substack{\alpha_0+\alpha_1+\cdots+\alpha_r=n-r\\\alpha_0,\alpha_1,\ldots,\alpha_r\geq 0}}U_s^{\alpha_0}W_s^{j_1}U_s^{\alpha_1}\cdots W_s^{j_{r}}U_s^{\alpha_r}\Bigg].
  \end{align*}
  Therefore the identity \eqref{eq2} is true for $k=q+1$ and hence by principle of mathematical induction \eqref{eq2} is true for each $q\in\mathbb{N}$. This completes the proof.
  
\end{proof}
Now we are in a position to state and prove our main result in this section.
\begin{thm}\label{thconuni}
	Let $T$ and $V$ be two contractions in $\mathscr{H}$ such that
	\vspace{0.1in}
	
		$(i)$ $V^*V=VV^*=I$, and $\dim(\ker T)=\dim(\ker T^*)$,
		\vspace{0.1in}
		
		$(ii)$ $T-V\in\bnh$, and $(I-T^*T)^{1/2}\in\bnh$.
	\vspace{0.1in}
	
	Let $T=V_T |T|$ be the polar decomposition of $T$, where $V_T$ is a partial isometry on $\mathscr{H}$ and $|T|=(T^*T)^{1/2}$. Set
 $\mathcal{L}:=\begin{bmatrix}
		TV^*&-D_{T^*}V_T\Big|_{\DT}\\
		D_T V^*&T^*V_T\Big|_{\DT}\\
	\end{bmatrix}:\begin{bmatrix}
	\mathscr{H}\\\\
	\mathcal{D}_T
\end{bmatrix}\longrightarrow \begin{bmatrix}
\mathscr{H}\\\\
\mathcal{D}_T
\end{bmatrix}$. 
Then $\mathcal{L}$ is a unitary operator on $\mathscr{H}\oplus\DT$ and, hence there exists a unique self-adjoint operator $L\in\mathcal{B}_n(\mathscr{H}\oplus\DT)$ with $\sigma(L)\subseteq(-\pi,\pi]$ such that $\mathcal{L}=e^{iL}$. Furthermore, if we denote $V_s:=\PH e^{isL}V, s\in [0,1]$, then for $\phi\in \mathcal{F}_n(\mathbb{T})$, 
	\begin{align*}
		\left\{\phi(T)-\phi(V)-\sum_{k=1}^{n-1}\dfrac{1}{k!}\dfrac{d^k}{ds^k}\Big|_{s=0}\phi(V_s) \right\}\in\boh,
	\end{align*} 
and there exists an $L^1(\mathbb{T})$-function  $\xi_n$ 
(unique up to additive constant) depend only on $n,T$ and $V$ such that 
	\begin{align*}
		\textup{Tr}\left\{\phi(T)-\phi(V)-\sum_{k=1}^{n-1}\dfrac{1}{k!}\dfrac{d^k}{ds^k}\Big|_{s=0}\phi(V_s) \right\}=\int_{0}^{2\pi}\dfrac{d^n}{dt^n}\{\phi(e^{it})\}\xi_n(t)dt.
	\end{align*}
\end{thm}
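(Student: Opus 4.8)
The plan is to deduce the statement from Skripka's unitary result (Theorem~\ref{th1}) by realizing the pair $(T,V)$ as a compression of a multiplicative path of unitaries generated by $L$.

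\smallskip

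\emph{Unitarity of $\mathcal{L}$.} First I would record that the hypothesis $\dim(\ker T)=\dim(\ker T^*)$ is exactly what permits the polar partial isometry $V_T$ to be taken as a \emph{unitary} on $\mathscr{H}$ (extend it by any unitary identification of $\ker T$ with $\ker T^*$), and I read $V_T$ in the statement in this way. With this, $\mathcal{L}$ factors as $\mathcal{L}=\mathcal{J}\begin{bmatrix}V^*&0\\0&V_T|_{\DT}\end{bmatrix}$, where $\mathcal{J}=\begin{bmatrix}T&-D_{T^*}\\ D_T&T^*\end{bmatrix}$ is the elementary rotation, which is unitary on $\mathscr{H}\oplus\mathscr{H}$ by the identities $D_T^2=I-T^*T$, $D_{T^*}^2=I-TT^*$, $TD_T=D_{T^*}T$. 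Then $\mathcal{L}^*\mathcal{L}=I$ reduces to $V^*V=I$ and the isometry of $V_T|_{\DT}$; the equation $\mathcal{L}\mathcal{L}^*=I$ uses, in the $\mathscr{H}$–corner, the identities $T^*V_T=|T|$, $T^*D_{T^*}=D_TT^*$ together with $D_{T^*}V_T|_{\ker D_T}=0$ (valid because $V_T=T$ on $\ker D_T$ and $D_{T^*}T=TD_T$), and, crucially, the surjectivity $V_TV_T^*=I$ — this last being precisely where the unitarity of $V_T$, i.e. the dimension hypothesis, is needed. I would carry out these four block computations in turn, checking the bottom-right and top-left corners against $\DT$ rather than $\mathscr{H}$ and using that $|T|$ commutes with $D_T$, so that $|T|\,\DT\subseteq\DT$.

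\smallskip

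\emph{The logarithm.} Next I would verify $\mathcal{L}-I\in\mathcal{B}_n(\mathscr{H}\oplus\DT)$ entry by entry: the $(1,1)$-block is $(T-V)V^*$, the off-diagonal blocks carry $D_T$ and $D_{T^*}$, and the $(2,2)$-block is $-(I-|T|)|_{\DT}=-D_T^2(I+|T|)^{-1}|_{\DT}$. All lie in $\mathcal{B}_n$ once one observes that $D_{T^*}\in\bnh$: indeed $D_T$ and $D_{T^*}$ share the same nonzero singular values, the only discrepancy occurring at the eigenvalue $1$ with multiplicities $\dim(\ker T)$ and $\dim(\ker T^*)$, which agree and are finite by hypothesis (ii) and (i). Being unitary with $\mathcal{L}-I$ compact, $\mathcal{L}$ has eigenvalues $e^{i\theta_j}\to1$; defining $L$ by the branch of $-i\log$ with values in $(-\pi,\pi]$ gives a self-adjoint $L$ with $\sigma(L)\subseteq(-\pi,\pi]$ and $e^{iL}=\mathcal{L}$, and since $|\theta_j|\asymp|e^{i\theta_j}-1|$ near $1$ we get $\sum_j|\theta_j|^n\asymp\|\mathcal{L}-I\|_n^n<\infty$, so $L\in\mathcal{B}_n$; uniqueness of $L$ with the prescribed spectrum is immediate from the functional calculus.

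\smallskip

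\emph{Reduction to Theorem~\ref{th1}.} Put $\mathcal{V}=V\oplus I_{\DT}$, a unitary on $\mathscr{H}\oplus\DT$, and $\mathcal{U}_s=e^{isL}\mathcal{V}$. Then $\mathcal{U}_0=\mathcal{V}$, $\mathcal{U}_1=\mathcal{L}\mathcal{V}$, $\PH\mathcal{U}_s|_{\mathscr{H}}=V_s$, and a one-line computation gives $V_0=V$ and $\PH\mathcal{U}_1|_{\mathscr{H}}=T$. Applying Theorem~\ref{th1} to the unitary multiplicative path $\{\mathcal{U}_s\}$ on $\mathscr{H}\oplus\DT$ yields an $L^1$ spectral shift $\eta_n$ with $\|\eta_n\|_1\lesssim\|L\|_n^n$ and the trace formula for the ``big'' remainder $\phi(\mathcal{U}_1)-\phi(\mathcal{U}_0)-\sum_{k=1}^{n-1}\frac{1}{k!}\frac{d^k}{ds^k}\big|_{0}\phi(\mathcal{U}_s)$. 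It then remains to transfer this identity to the contraction-side remainder of the statement, for which I would split $\phi=\phi_++\phi_-$ as in \eqref{INTeq1}, use the Sz.-Nagy--Foia\c{s} functional calculus, and invoke Lemma~\ref{lm1} to write each G\^ateaux derivative $\frac{d^k}{ds^k}\big|_0\phi(V_s)$ explicitly; setting $\xi_n=\eta_n$ would then give the claim.

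\smallskip

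\emph{The main obstacle.} The hard part is exactly this last transfer. Since $\mathscr{H}$ is \emph{not} semi-invariant for $\mathcal{U}_s$ — one checks $\PH\mathcal{U}_1^2|_{\mathscr{H}}=T^2-D_{T^*}V_TD_T\neq T^2$ — the naive compression $\PH\phi(\mathcal{U}_s)|_{\mathscr{H}}=\phi(V_s)$ fails for $\deg\phi\ge2$, so Skripka's formula cannot simply be compressed. I would instead pass to the minimal unitary dilation space $\mathscr{F}=\HDTs\oplus\mathscr{H}\oplus\HDT$, where the genuine dilation identities $\PH U_T^k|_{\mathscr{H}}=T^k$ $(k\ge0)$ and their adjoint analogues do hold, lift $L$ to a self-adjoint operator in $\mathcal{B}_n(\mathscr{F})$ generating a multiplicative path that simultaneously dilates every $V_s$, and then show that the difference between the contraction-side remainder on $\mathscr{H}$ and the compression of the dilation-side remainder is trace class with vanishing trace, by cyclicity of the trace together with the explicit derivative expansion of Lemma~\ref{lm1}. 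Establishing this trace-matching, and the membership of the resulting remainder in $\boh$, is the crux, and it is here that the argument of \cite{Mor} must be adapted with care to the higher-order setting.
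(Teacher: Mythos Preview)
Your plan is correct and, after the detour, converges to the paper's approach. The paper, however, skips the attempt on $\mathscr{H}\oplus\DT$ and works on the dilation space $\mathscr{F}=\HDTs\oplus\mathscr{H}\oplus\HDT$ from the outset: it takes $U_1=U_T$ to be the minimal unitary dilation of $T$, writes down an explicit unitary extension $U_0$ of $V$ on $\mathscr{F}$ (a block-triangular matrix built from $V$, $V_T$, and the shifts $S_{\DT},S_{\DTs}$), and computes $U_1U_0^*=I_{\HDTs}\oplus\mathcal{L}\oplus I_{S_{\DT}\HDT}$ with respect to the decomposition $\mathscr{F}=\HDTs\oplus(\mathscr{H}\oplus\DT)\oplus S_{\DT}\HDT$. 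This one computation simultaneously yields the unitarity of $\mathcal{L}$, the lift $A=0\oplus L\oplus0$ with $U_1=e^{iA}U_0$, and (via $e^{iA}-I=(U_1-U_0)U_0^*$ together with $|x|\le\frac{\pi}{2}|e^{ix}-1|$ on $(-\pi,\pi]$) the membership $L\in\mathcal{B}_n$; your direct entrywise verifications are fine but become redundant once $U_0$ is in hand. For the trace-matching step, the mechanism in the paper is not cyclicity but block structure: using Lemma~\ref{lm1} and the explicit block forms of $U_0$, $U_1$, and $A$, one shows that the compression $\PH(\,\cdot\,)|_{\mathscr{H}}$ of each summand in the dilation-side Taylor remainder \emph{equals} the corresponding summand of the contraction-side remainder (so the two remainders agree as operators, not merely in trace), while the complementary compression $P_{\mathscr{F}\ominus\mathscr{H}}(\,\cdot\,)|_{\mathscr{F}\ominus\mathscr{H}}$ carries $\HDTs$ into $\HDT$ for positive powers (and conversely for negative powers), hence is block off-diagonal with zero trace.
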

\begin{proof}
 Since $T$ is a contraction on $\mathscr{H}$, then we can dilate $T$ to a unitary operator on the larger space containing  $\mathscr{H}$. Let $U_T$ be the corresponding minimal unitary dilation of $T$ on $\mathscr{F}=\mathbf{H}^2_{\mathcal{D}_{T^*}}(\mathbb{D})\oplus\mathscr{H}\oplus\mathbf{H}^2_{\mathcal{D}_{T}}(\mathbb{D})$ and hence as in \eqref{dilationmat} we have the following block matrix representation of $U_T$ on $\mathscr{F}$:
 \begin{align}\label{dilationmatalt}
 	U_T:=\begin{bmatrix}
 		S_{\DTs}^*&0&0\\
 		D_{T^*}P_{\DTs}&T&0\\
 		-T^*P_{\DTs}&D_{T}&S_{\mathcal{D}_T}
 	\end{bmatrix}:\begin{bmatrix}
 		\HDTs\\
 		\mathscr{H}\\
 		\HDT
 	\end{bmatrix}\longrightarrow \begin{bmatrix}
 		\HDTs\\
 		\mathscr{H}\\
 		\HDT
 	\end{bmatrix}.
 \end{align}
Now onward we denote $U_1:=U_T$. On the other hand we note that  $T=V_T |T|$ is the polar decomposition of $T$, where $|T|=(T^*T)^{1/2}$ and $V_T$ is an isometry from $\overline{Ran(T^*)}$ onto $\overline{Ran(T)}$. Therefore by using the hypothesis $\dim(\ker T)=\dim(\ker T^*)$, we can extend $V_T$ to a unitary operator on the full space $\mathscr{H}$. Moreover, as in \cite{Mor}, it is easy to observe that
\begin{align}\label{eq1}
	V_TD_T=D_{T^*}V_T,~(1-|T|)=(1+|T|)^{-1}(1-T^*T), \text{ and } V_T-T=V_T(1-|T|).
\end{align} 
Our next aim is to extend the unitary operator $V:\mathscr{H}\to \mathscr{H}$ to the larger space $\mathscr{F}$ as a unitary operator. To that aim, we set an operator $U_0:\mathscr{F}\to \mathscr{F}$ whose block matrix representation on the space $\mathscr{F}$ is the following:
	\begin{align}\label{dilationmatalt2}
	U_0:=\begin{bmatrix}
		S^*_{\DTs}&0&0\\
		0&V&0\\
		-V_{T}^*P_{\DTs}&0&S_{\mathcal{D}_T}
	\end{bmatrix}:\begin{bmatrix}
	\HDTs\\
	\mathscr{H}\\
	\HDT
\end{bmatrix}\longrightarrow \begin{bmatrix}
\HDTs\\
\mathscr{H}\\
\HDT
\end{bmatrix}.
\end{align}
Then using the relations $S_{\mathcal{D}_{T^*}}S_{\mathcal{D}_{T^*}}^*=I-P_{\mathcal{D}_{T^*}}$, $S_{\mathcal{D}_{T}}S_{\mathcal{D}_{T}}^*=I-P_{\mathcal{D}_{T}}$, $V_TP_{\mathcal{D}_T}=P_{\mathcal{D}_{T^*}}V_T$, $P_{\mathcal{D}_{T^*}}S_{\mathcal{D}_{T^*}}=0$, and $P_{\mathcal{D}_{T}}S_{\mathcal{D}_{T}}=0$, it is easy to check that $U_0$ is a unitary operator on $\mathscr{F}$, and $U_0$ is also an extension of $V$. Now the block matrix representation of $U_1-U_0$ on $\mathscr{F}$ is the following:
\begin{equation}\label{dilationmatalt1}
	U_1-U_0:= \begin{bmatrix}
	0&0&0\\
	D_{T^*}P_{\DTs}&T-V&0\\
	(V_{T}^*-T^*)P_{\DTs}&D_T&0
\end{bmatrix},
\end{equation}
which by using the relations listed in \eqref{eq1} together with the hypothesis $(ii)$ implies that each non-zero component of \eqref{dilationmatalt1} are in Schatten-n class and hence $U_1-U_0\in\bnf$. Now we have a pair $(U_1,U_0)$ of unitary operators on $\mathscr{F}$ and note that the block matrix representation of $U_1U_0^*$ with respect to the decomposition $\mathscr{F}=\HDTs\oplus(\mathscr{H}\oplus\DT)\oplus S_{\mathcal{D}_T}\HDT$ is the following: 
	\begin{align}\label{dilationmatalt8}
\nonumber	U_1U_0^*:=&\begin{bmatrix}
		I&0&0\\
		0&TV^*&-D_{T^*}V_T\Big|_{\DT}\\
		0&D_T V^*&T^*V_T\Big|_{\DT}+S_{\mathcal{D}_{T}}S_{\mathcal{D}_{T}}^*
	\end{bmatrix}:\begin{bmatrix}
		\HDTs\\
		\mathscr{H}\\
		\HDT
	\end{bmatrix}\to 
	\begin{bmatrix}
		\HDTs\\
		\mathscr{H}\\
		\HDT
	\end{bmatrix}\\
\nonumber	=&\begin{bmatrix}
		I&0&0&0\\
		0&TV^*&-D_{T^*}V_T\Big|_{\DT}&0\\
		0&D_T V^*&T^*V_T\Big|_{\DT}&0\\
		0&0&0&I
	\end{bmatrix}:\begin{bmatrix}
		\HDTs\\
		\mathscr{H}\\
		\DT\\
		S_{\DT}\HDT
	\end{bmatrix}\to 
	\begin{bmatrix}
		\HDTs\\
		\mathscr{H}\\
		\DT\\
		S_{\DT}\HDT
	\end{bmatrix}\\
	=&\begin{bmatrix}
		I&0&0\\
		0&\mathcal{L}&0\\
		0&0&I
	\end{bmatrix}:\begin{bmatrix}
		\HDTs\\
		\mathscr{H}\oplus\DT\\
		S_{\DT}\HDT
	\end{bmatrix}\to 
	\begin{bmatrix}
		\HDTs\\
		\mathscr{H}\oplus\DT\\
		S_{\DT}\HDT
	\end{bmatrix},
\end{align} 
where  we set $\mathcal{L}:=\begin{bmatrix}
	TV^*&-D_{T^*}V_T\Big|_{\DT}\\
	D_T V^*&T^*V_T\Big|_{\DT}\\
\end{bmatrix} :\begin{bmatrix}
\mathscr{H}\\\\
\DT
\end{bmatrix} \to  \begin{bmatrix}
\mathscr{H}\\\\
\DT
\end{bmatrix}$
 and $\mathcal{L}$ is a unitary operator on $(\mathscr{H}\oplus\DT)$. Therefore there exists a self-adjoint operator $L:\mathscr{H}\oplus\DT\to\mathscr{H}\oplus\DT$ with $\sigma(L)\subset (-\pi,\pi]$ such that $\mathcal{L}=e^{iL}$. 
Now if we set
 \begin{equation}\label{dilationmatalt3}
 A: =\begin{bmatrix}
	0&0&0\\
	0&L&0\\
	0&0&0
\end{bmatrix}:\begin{bmatrix}
	\HDTs\\
	\mathscr{H}\oplus\DT\\
	S_{\DT}\HDT
\end{bmatrix}\to 
\begin{bmatrix}
	\HDTs\\
	\mathscr{H}\oplus\DT\\
	S_{\DT}\HDT
\end{bmatrix},
\end{equation} 
then $A$ is a self-adjoint operator on $\mathscr{F}=\HDTs\oplus(\mathscr{H}\oplus\DT)\oplus S_{\mathcal{D}_T}\HDT$ such that $\sigma(A)\subset (-\pi,\pi]$, $U_1U_0^*=e^{iA}$ and hence  $U_1=e^{iA}U_0$.
Moreover, the equality $e^{iA}-I=(U_1-U_0)U_0^*$
together with the fact that $U_1-U_0\in \mathcal{B}_n(\mathscr{F})$ immediately implies $e^{iA}-I\in \mathcal{B}_n(\mathscr{F})$. Finally, by applying spectral theorem of the self-adjoint operator $A$ and using the inequality  $|x|\leq \dfrac{\pi}{2}|e^{ix}-1|,~x\in(-\pi,\pi],$ we conclude that  
 $A\in\bnf$ and hence $L\in\mathcal{B}_n(\mathscr{H}\oplus\DT)$. Therefore we have a pair $(U_1,U_0)$ of unitary operators on $\mathscr{F}$ satisfying the hypothesis of Theorem~\ref{th1} and hence for any $\phi\in \mathcal{F}_n(\mathbb{T})$,
\begin{equation}\label{constat1}
	\left\{\phi(U_1)-\phi(U_0)-\sum_{k=1}^{n-1}\dfrac{1}{k!}\dfrac{d^k}{ds^k}\Big|_{s=0}\phi(U_s) \right\}\in \bnh,
\end{equation}
and there exists an $L^1([0, 2\pi])$-function $\eta_n=\eta_{n,U_0,A}$ (unique up to an additive constant) such that 
\begin{align}\label{constat2}
	\textup{Tr}\left\{\phi(U_1)-\phi(U_0)-\sum_{k=1}^{n-1}\dfrac{1}{k!}\dfrac{d^k}{ds^k}\Big|_{s=0}\phi(U_s) \right\}=\int_{0}^{2\pi}\dfrac{d^n}{dt^n}\big\{\phi(e^{it})\big\}\eta_n(t)dt,
\end{align}
where $U_s=e^{isA}U_0,~s\in [0,1]$. Our next aim is to show that for $\phi\in \mathcal{F}_n(\mathbb{T})$,
\begin{align}\label{constat3}
	\textup{Tr}\left\{\phi(T)-\phi(V)-\sum_{k=1}^{n-1}\dfrac{1}{k!}\dfrac{d^k}{ds^k}\Big|_{s=0}\phi(V_s) \right\}=\textup{Tr}\left\{\phi(U_1)-\phi(U_0)-\sum_{k=1}^{n-1}\dfrac{1}{k!}\dfrac{d^k}{ds^k}\Big|_{s=0}\phi(U_s) \right\},
\end{align}
where \begin{equation}\label{CTeq1}
	V_s: =\PH e^{isA} U_0\Big|_\mathscr{H}=\PH e^{isL} V, ~s\in [0,1].
\end{equation}
To this end, it is enough to deal with the monomials, that is functions like $\phi_q(z)=z^q,~ z\in \mathbb{T}$ and $q\in \mathbb{Z}$. Now if $q\in \mathbb{N}$, then by using Lemma \ref{lm1}, we conclude for $1\leq k\leq n-1$ that
\begin{align}\label{eq4}
	\dfrac{d^k}{ds^k}\Big|_{s=0}\{\phi_q(U_s)\}=&\sum_{r=1}^{k}~\sum_{\substack{\alpha_0+\cdots+\alpha_r=q-r\\\alpha_0,\ldots,\alpha_r\geq 0}}~\sum_{\substack{l_1+\cdots+l_{r}=k\\l_1,\ldots,l_{r}\geq 1}}\dfrac{k!}{l_1!\cdots l_r!}U_0^{\alpha_0}((iA)^{l_1}U_0)U_0^{\alpha_1}\cdots ((iA)^{l_{r}}U_0)U_0^{\alpha_r},
\end{align}
and 
\begin{align}\label{eq5}
	\dfrac{d^k}{ds^k}\Big|_{s=0}\{\phi_q(V_s)\}=&\sum_{r=1}^{k}~\sum_{\substack{\alpha_0+\cdots+\alpha_r=q-r\\\alpha_0,\ldots,\alpha_r\geq 0}}~\sum_{\substack{l_1+\cdots+l_{r}=k\\l_1,\ldots,l_{r}\geq 1}}\dfrac{k!}{l_1!\cdots l_r!}V^{\alpha_0}\PH ((iL)^{l_1}V)V^{\alpha_1}\cdots \PH((iL)^{l_{r}}V)V^{\alpha_r}.
\end{align}
Next we denote $X_0:=U_1^n-U_0^n$, $X_r:=U_0^{\alpha_0}((iA)^{l_1}U_0)U_0^{\alpha_1}\cdots ((iA)^{l_{r}}U_0)U_0^{\alpha_r}$, where $\alpha_j\geq 0$ for $0\leq j\leq r$, and $l_{j'}\geq 1$ for $1\leq j'\leq r$ , and $r\geq 1$. Then by analyzing the block matrix representations
\eqref{dilationmatalt}, \eqref{dilationmatalt2} and \eqref{dilationmatalt3} of $U_1$, $U_0$ and $A$ respectively we conclude 
\begin{align}
	\label{eq3}\PH X_0\Big|_{\mathscr{H}}=&\PH (U_1^n-U_0^n)\Big|_{\mathscr{H}}=T^n-V^n,\\
	\nonumber\PH X_r\Big|_{\mathscr{H}}=&\PH U_0^{\alpha_0}((iA)^{l_1}U_0)U_0^{\alpha_1}\cdots ((iA)^{l_{r}}U_0)U_0^{\alpha_r}\Big|_{\mathscr{H}}\\
	\nonumber=&\PH U_0^{\alpha_0}P_{\mathscr{H}\oplus\mathcal{D}_{T}}((iA)^{l_1}P_{\mathscr{H}\oplus\mathcal{D}_{T}}U_0)U_0^{\alpha_1}P_{\mathscr{H}\oplus\mathcal{D}_{T}}\cdots U_0^{\alpha_{r-1}}P_{\mathscr{H}\oplus\mathcal{D}_{T}}((iA)^{l_{r}}P_{\mathscr{H}\oplus\mathcal{D}_{T}}U_0)U_0^{\alpha_r}\Big|_{\mathscr{H}}\\
	\label{eq6}=& V^{\alpha_0}\PH ((iL)^{l_1}V)V^{\alpha_1}\cdots V^{\alpha_{r-1}}\PH ((iL)^{l_{r}}V)V^{\alpha_r}.
\end{align}
Therefore combining equations \eqref{eq4},\eqref{eq5},\eqref{eq3}, and \eqref{eq6} we get
\begin{align}\label{dilationmatalt4}
	\phi_q(T)-\phi_q(V)-\sum_{k=1}^{n-1}\dfrac{1}{k!}\dfrac{d^k}{ds^k}\Big|_{s=0}\phi_q(V_s) =\PH\left(\phi_q(U_1)-\phi_q(U_0)-\sum_{k=1}^{n-1}\dfrac{1}{k!}\dfrac{d^k}{ds^k}\Big|_{s=0}\phi_q(U_s)\right)\Bigg|_{\mathscr{H}},
\end{align}
for all $q\in \mathbb{N}$. Now for $q\in \mathbb{Z}$, $q<0$, recall that $T^q={T^*}^{-q}$ for a given contraction $T$ as in \eqref{INTeq1}. Therefore using the facts
$\dfrac{d^k}{ds^k}\Big|_{s=0}\{\phi_q(U_s)\} =\left(\dfrac{d^k}{ds^k}\Big|_{s=0}\{\phi_{-q}(U_s)\}\right)^*$ and 
$\dfrac{d^k}{ds^k}\Big|_{s=0}\{\phi_q(V_s)\} =\left(\dfrac{d^k}{ds^k}\Big|_{s=0}\{\phi_{-q}(V_s)\}\right)^*$, and again by applying Lemma \ref{lm1} together with the same analysis as above we also get for $q\in \mathbb{Z},~q<0$ that
\begin{align}\label{dilationmatalt5}
	\phi_q(T)-\phi_q(V)-\sum_{k=1}^{n-1}\dfrac{1}{k!}\dfrac{d^k}{ds^k}\Big|_{s=0}\phi_q(V_s) =\PH\left(\phi_q(U_1)-\phi_q(U_0)-\sum_{k=1}^{n-1}\dfrac{1}{k!}\dfrac{d^k}{ds^k}\Big|_{s=0}\phi_q(U_s)\right)\Bigg|_{\mathscr{H}}.
\end{align}
Therefore using \eqref{constat1}, \eqref{dilationmatalt4} and \eqref{dilationmatalt5} we conclude 
$\left\{\phi_q(T)-\phi_q(V)-\sum\limits_{k=1}^{n-1}\dfrac{1}{k!}\dfrac{d^k}{ds^k}\Big|_{s=0}\phi_q(V_s)\right\}$
is a trace class operator and 
\begin{equation} \label{dilationmatalt6}
	\begin{split}
	&\textup{Tr}\left\{\phi_q(T)-\phi_q(V)-\sum_{k=1}^{n-1}\dfrac{1}{k!}\dfrac{d^k}{ds^k}\Big|_{s=0}\phi_q(V_s)\right\}\\ &\hspace{1in}=\textup{Tr}\left\{\PH\left(\phi_q(U_1)-\phi_q(U_0)-\sum_{k=1}^{n-1}\dfrac{1}{k!}\dfrac{d^k}{ds^k}\Big|_{s=0}\phi_q(U_s)\right)\Bigg|_{\mathscr{H}}\right\},\forall q\in \mathbb{Z}.
	\end{split}
\end{equation}
On the other hand again by analyzing the structures of $U_1$, $U_0$, $U_1-U_0$ and $A$ as in \eqref{dilationmatalt}, \eqref{dilationmatalt2}, \eqref{dilationmatalt1} and \eqref{dilationmatalt3}  respectively we get
\begin{align*}
 P_{\mathscr{F}\ominus\mathscr{H}}X_0\Big|_{\mathscr{F}\ominus\mathscr{H}}=&\sum_{k=0}^{n-1}P_{\mathscr{F}\ominus\mathscr{H}}U_1^{n-k-1}(U_1-U_0)U_0^k\Big|_{\mathscr{F}\ominus\mathscr{H}}\\ =&\sum_{k=0}^{n-1}P_{\mathscr{F}\ominus\mathscr{H}}U_1^{n-k-1}(U_1-U_0)U_0^kP_{\HDTs} \Big|_{\mathscr{F}\ominus\mathscr{H}}\\
	=&\sum_{k=0}^{n-1}P_{\mathscr{F}\ominus\mathscr{H}}U_1^{n-k-1}P_{\mathscr{H}\oplus\HDT}(U_1-U_0)U_0^kP_{\HDTs} \Big|_{\mathscr{F}\ominus\mathscr{H}}\\
	=&\sum_{k=0}^{n-1}P_{\HDT}U_1^{n-k-1}P_{\mathscr{H}\oplus\HDT}(U_1-U_0)U_0^kP_{\HDTs} \Big|_{\mathscr{F}\ominus\mathscr{H}},
\end{align*}
and for $r\geq 1$,
\begin{align*}
	 P_{\mathscr{F}\ominus\mathscr{H}}X_r\Big|_{\mathscr{F}\ominus\mathscr{H}}=&P_{\mathscr{F}\ominus\mathscr{H}}U_0^{\alpha_0}((iA)^{l_1}U_0)U_0^{\alpha_1}\cdots ((iA)^{l_{r}}U_0)U_0^{\alpha_r}\Big|_{\mathscr{F}\ominus\mathscr{H}}\\
	=&P_{\mathscr{F}\ominus\mathscr{H}}U_0^{\alpha_0}P_{\mathscr{H}\oplus\mathcal{D}_{T}}((iA)^{l_1}P_{\mathscr{H}\oplus\mathcal{D}_{T}}U_0)U_0^{\alpha_1}\cdots P_{\mathscr{H}\oplus\mathcal{D}_{T}}((iA)^{l_{r}}P_{\mathscr{H}\oplus\mathcal{D}_{T}}U_0)U_0^{\alpha_r}\Big|_{\mathscr{F}\ominus\mathscr{H}}\\
	=&P_{\HDT}U_0^{\alpha_0}P_{\mathscr{H}\oplus\mathcal{D}_{T}}((iA)^{l_1}P_{\mathscr{H}\oplus\mathcal{D}_{T}}U_0)U_0^{\alpha_1}\\
	&\hspace*{1in}\times\cdots\times P_{\mathscr{H}\oplus\mathcal{D}_{T}}((iA)^{l_{r}}P_{\mathscr{H}\oplus\mathcal{D}_{T}}U_0)U_0^{\alpha_r}P_{\HDTs}\Big|_{\mathscr{F}\ominus\mathscr{H}},
\end{align*}
which implies that the operator $P_{\mathscr{F}\ominus\mathscr{H}}\Bigg(\phi_q(U_1)-\phi_q(U_0)-\sum\limits_{k=1}^{n-1}\dfrac{1}{k!}\dfrac{d^k}{ds^k}\Big|_{s=0}\phi_q(U_s)\Bigg)\Bigg|_{\mathscr{F}\ominus\mathscr{H}}$ maps $\HDTs\oplus \textbf{0}\oplus \textbf{0}$ to $\textbf{0}\oplus \textbf{0}\oplus\HDT$ for $q\in \mathbb{N}$ and the operator  $P_{\mathscr{F}\ominus\mathscr{H}}\Bigg(\phi_q(U_1)-\phi_q(U_0)$ $-\sum\limits_{k=1}^{n-1}\dfrac{1}{k!}\dfrac{d^k}{ds^k}\Big|_{s=0}\phi_q(U_s)\Bigg)\Bigg|_{\mathscr{F}\ominus\mathscr{H}}$ maps $ \textbf{0}\oplus \textbf{0}\oplus\HDT$ to $\HDTs\oplus \textbf{0}\oplus \textbf{0}$ for $q\in \mathbb{Z},~q<0$. These observations immediately yields that 
\begin{align}\label{dilationmatalt7}
	\textup{Tr}\left\{P_{\mathscr{F}\ominus\mathscr{H}}\left(\phi_q(U_1)-\phi_q(U_0)-\sum_{k=1}^{n-1}\dfrac{1}{k!}\dfrac{d^k}{ds^k}\Big|_{s=0}\phi_q(U_s)\right)\Bigg|_{\mathscr{F}\ominus\mathscr{H}} \right\}=0, ~~\forall q\in \mathbb{Z}.
\end{align}
Therefore combining equations \eqref{dilationmatalt6} and \eqref{dilationmatalt7} we get
\begin{equation*}
	\textup{Tr}\left\{\phi_q(T)-\phi_q(V)-\sum_{k=1}^{n-1}\dfrac{1}{k!}\dfrac{d^k}{ds^k}\Big|_{s=0}\phi_q(V_s)\right\} =\textup{Tr}\left\{\phi_q(U_1)-\phi_q(U_0)-\sum_{k=1}^{n-1}\dfrac{1}{k!}\dfrac{d^k}{ds^k}\Big|_{s=0}\phi_q(U_s)\right\}
\end{equation*}
for all $ q\in \mathbb{Z}$ and hence \eqref{constat3} follows. Finally the conclusion of the theorem follows by combining equations \eqref{constat2} and \eqref{constat3}. This completes the proof.  
\end{proof}

\section{Higher order Trace formula for pair of contractions }\label{concon}
In the previous section, we discuss the trace formula for pairs of contractions $(T, V)$ with the assumption that $V$ is unitary. In this section, we remove the assumption on $V$. In other words, we prove the trace formula for pairs of contractions $(T_0,T_1)$ on $\hil$. The technique involved here is standard and similar to the idea mentioned in \cite{Mor} with an appropriate modification, that means first we dilate $(T_0, T_1)$ to a pair of contractions $(T, V)$ with $V$ is a unitary operator on the bigger space $\mathscr{F}$ containing $\hil$ as a subspace and then use the existing trace formula for the pair  $(T, V)$ obtained in our last section to get the required trace formula in this section. The following is the main result in this section.

\begin{thm}\label{thconcon}
	Let $T_0$ and $T_1$ be two contractions in $\mathscr{H}$ such that
	\vspace{0.1in}
	
	$(i)$~$\dim(\ker T_0)=\dim(\ker T_0^*)$, and ~$\dim(\ker T_1)=\dim(\ker T_1^*)$,
	\vspace{0.1in}
	
	$(ii)$~$T_1-T_0\in\bnh$, and ~$(I-T_j^*T_j)^{1/2}\in\bnh$ for $j=0,1$.
\vspace{0.1in}

Let $T_j=V_{T_j} |T_j|$ be the polar decomposition of $T_j$, where $V_{T_j}$ is a partial isometry on $\mathscr{H}$ and $|T_j|=(T_j^*T_j)^{1/2}$ for $j=0,1$. Set 
\begin{align*}
	\mathcal{M}:=\begin{bmatrix}
		I&0&0&0\\
		0&T_1T_0^*&T_1D_{T_0}P_{\mathcal{D}_{T_0}}&-D_{T_1^*}V_{T_1}\\
		0&-V_{T_0}^*D_{T_0^*}& |T_0|P_{\mathcal{D}_{T_0}}+(I-P_{\mathcal{D}_{T_0}})&0\\
		0&D_{T_1}T_0^*&D_{T_1}D_{T_0}P_{\mathcal{D}_{T_0}}&T_1^*V_{T_1}
	\end{bmatrix}:
	\begin{bmatrix}
		\HDTzeros\\
		\mathscr{H}\\
		\HDTzero\\
		\mathcal{D}_{T_1}	
	\end{bmatrix}\to \begin{bmatrix}
		\HDTzeros\\
		\mathscr{H}\\
		\HDTzero\\
		\mathcal{D}_{T_1}	
	\end{bmatrix}.
\end{align*}
Then $\mathcal{M}$ is a unitary operator on $\HDTzeros\oplus\mathscr{H}\oplus\HDTzero\oplus\mathcal{D}_{T_1}=\mathscr{F}\oplus\mathcal{D}_{T_1}$ and hence there exists a unique self-adjoint operator 
 $M\in\mathcal{B}_n(\mathscr{F}\oplus\mathcal{D}_{T_1})$ with $\sigma(M)\subseteq(-\pi,\pi]$ such that $\mathcal{M}=e^{iM}$. Furthermore, if we denote 
 \begin{equation}\label{PCeqpath}
 	T_s= \PH e^{isM}\begin{bmatrix}
 		0\\
 		T_0\\
 		D_{T_0}\\
 		0
 	\end{bmatrix}:\mathscr{H} \to \mathscr{H},~~ s\in [0,1],
 \end{equation}
then for $\phi\in \mathcal{F}_n(\mathbb{T})$, 
\begin{align*}
	\left\{\phi(T_1)-\phi(T_0)-\sum_{k=1}^{n-1}\dfrac{1}{k!}\dfrac{d^k}{ds^k}\Big|_{s=0}\phi(T_s) \right\}\in\boh,
\end{align*} 
and there exists an $L^1(\mathbb{T})$-function  $\xi_n$ 
(unique up to additive constant) depend only on $n,T_1$ and $T_0$ such that 
\begin{align*}
	\textup{Tr}\left\{\phi(T_1)-\phi(T_0)-\sum_{k=1}^{n-1}\dfrac{1}{k!}\dfrac{d^k}{ds^k}\Big|_{s=0}\phi(T_s) \right\}=\int_{0}^{2\pi}\dfrac{d^n}{dt^n}\{\phi(e^{it})\}\xi_n(t)dt.
\end{align*}
\end{thm}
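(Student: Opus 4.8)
The plan is to deduce the statement from Theorem~\ref{thconuni} by a further dilation, following the scheme of \cite{Mor}. First I would dilate only $T_0$: let $V:=U_{T_0}$ be the minimal unitary dilation of $T_0$ as in \eqref{dilationmat} (with $T$ replaced by $T_0$), which is a unitary operator on $\mathscr{F}=\HDTzeros\oplus\mathscr{H}\oplus\HDTzero$ and satisfies $\PH V^{k}|_{\mathscr{H}}=T_0^{k}$, $\PH V^{*k}|_{\mathscr{H}}=T_0^{*k}$ for all $k\geq 0$. Next, following \cite{Mor}, I would build a contraction $\mathbf{T}$ on $\mathscr{F}$ that \emph{power-dilates} $T_1$, that is $\PH\mathbf{T}^{k}|_{\mathscr{H}}=T_1^{k}$ and $\PH\mathbf{T}^{*k}|_{\mathscr{H}}=T_1^{*k}$, which coincides with $V$ on the summand $\HDTzeros$ and whose defect space $\mathcal{D}_{\mathbf{T}}$ is identified with $\mathcal{D}_{T_1}$; the entries of $\mathbf{T}$ are dictated by demanding that $\mathbf{T}V^{*}$ reproduce the upper-left $3\times 3$ block of the matrix $\mathcal{M}$. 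The whole point is that the pair $(\mathbf{T},V)$ on $\mathscr{F}$ is then governed by Theorem~\ref{thconuni} (with the ground space $\mathscr{H}$ there played by $\mathscr{F}$), after which a compression from $\mathscr{F}$ to $\mathscr{H}$ recovers the pair $(T_1,T_0)$.

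Once $(\mathbf{T},V)$ is constructed I would verify the hypotheses of Theorem~\ref{thconuni}. The operator $V$ is unitary by construction. Using the defect identities \eqref{eq1} for $T_0$ and for $T_1$, every nonzero entry of $\mathbf{T}-V$ is built out of $T_1-T_0$, $D_{T_0}$, $D_{T_0^*}$, $D_{T_1}$ and $D_{T_1^*}$, so hypothesis $(ii)$ gives $\mathbf{T}-V\in\bnf$ and $D_{\mathbf{T}}\in\bnf$; moreover $\dim(\ker\mathbf{T})=\dim(\ker\mathbf{T}^{*})$ follows from the two equalities in hypothesis $(i)$. Granting these, the operator $\mathcal{M}$ is exactly the unitary $\mathcal{L}$ on $\mathscr{F}\oplus\mathcal{D}_{\mathbf{T}}=\mathscr{F}\oplus\mathcal{D}_{T_1}$ attached to the pair $(\mathbf{T},V)$ by Theorem~\ref{thconuni}; in particular $\mathcal{M}$ is unitary, and that theorem supplies the self-adjoint $M\in\mathcal{B}_n(\mathscr{F}\oplus\mathcal{D}_{T_1})$ with $\sigma(M)\subseteq(-\pi,\pi]$ and $\mathcal{M}=e^{iM}$, the trace-class membership of the bracketed operator, and the trace formula for $(\mathbf{T},V)$ at the level of $\mathscr{F}$, the multiplicative path being $\widetilde{V}_s:=\PF e^{isM}V$.

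It then remains to pass from $\mathscr{F}$ to $\mathscr{H}$, which runs parallel to equations \eqref{eq3}--\eqref{dilationmatalt7}. It suffices to treat monomials $\phi_q(z)=z^{q}$, $q\in\mathbb{Z}$. For such $\phi_q$, I would expand $\frac{d^{k}}{ds^{k}}\big|_{s=0}\phi_q(\widetilde V_s)$ by Lemma~\ref{lm1} into a sum of alternating words $V^{\alpha_0}\big(\PF(iM)^{l_1}V\big)V^{\alpha_1}\cdots\big(\PF(iM)^{l_r}V\big)V^{\alpha_r}$, just as in \eqref{eq4}. Compressing each such word by $\PH(\cdot)|_{\mathscr{H}}$ and telescoping exactly as in \eqref{eq6}---which is legitimate because $\mathscr{H}\oplus\HDTzero$ and $\HDTzero$ are both invariant under $V=U_{T_0}$, so that $\mathscr{H}$ is semi-invariant for $V$, and because $\HDTzeros\subseteq\ker M$---together with $\PH V^{k}|_{\mathscr{H}}=T_0^{k}$ and $\PH\mathbf{T}^{k}|_{\mathscr{H}}=T_1^{k}$, I obtain
\[ \PH\Big(\phi_q(\mathbf{T})-\phi_q(V)-\sum_{k=1}^{n-1}\tfrac{1}{k!}\tfrac{d^{k}}{ds^{k}}\big|_{s=0}\phi_q(\widetilde V_s)\Big)\Big|_{\mathscr{H}}=\phi_q(T_1)-\phi_q(T_0)-\sum_{k=1}^{n-1}\tfrac{1}{k!}\tfrac{d^{k}}{ds^{k}}\big|_{s=0}\phi_q(T_s), \]
where $T_s=\PH e^{isM}V|_{\mathscr{H}}$; since $\PH\PF=\PH$ and $V|_{\mathscr{H}}=(0,T_0,D_{T_0},0)^{\mathsf{T}}$, this $T_s$ is precisely \eqref{PCeqpath}. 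Finally, inspecting the block structure of $\mathbf{T}$, $V$ and $M$ shows, as in \eqref{dilationmatalt7}, that the compression of the same bracketed operator to $\mathscr{F}\ominus\mathscr{H}=\HDTzeros\oplus\HDTzero$ is off-diagonal (it carries $\HDTzeros$ into $\HDTzero$ for $q>0$ and the reverse for $q<0$) and hence has zero trace; therefore $\textup{Tr}_{\mathscr{F}}$ equals $\textup{Tr}_{\mathscr{H}}$, and the $\mathscr{F}$-level trace formula descends to the asserted formula, with $\xi_n$ the function furnished by Theorem~\ref{thconuni} for $(\mathbf{T},V)$, depending only on $n$, $T_1$, $T_0$.

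The principal obstacle is the construction and verification of the intermediate contraction $\mathbf{T}$: one must confirm that the operator singled out by the upper-left $3\times3$ block of $\mathcal{M}$ is genuinely a contraction on $\mathscr{F}$ power-dilating $T_1$, and must correctly compute its defect operator $D_{\mathbf{T}}$ and defect space $\mathcal{D}_{\mathbf{T}}\cong\mathcal{D}_{T_1}$, so that the $\mathcal{L}$-operator produced by Theorem~\ref{thconuni} literally coincides with $\mathcal{M}$---in particular so that the bordering blocks $-D_{T_1^*}V_{T_1}$, $D_{T_1}T_0^{*}$, $D_{T_1}D_{T_0}P_{\mathcal{D}_{T_0}}$ and $T_1^{*}V_{T_1}$ emerge exactly as written. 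Once this block bookkeeping and the attendant Schatten-$n$ estimates are in place, the compression argument is routine and mirrors the one already carried out for Theorem~\ref{thconuni}.
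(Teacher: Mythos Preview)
Your proposal is correct and follows essentially the same approach as the paper: the intermediate contraction is written down explicitly in \eqref{trep}, the hypotheses of Theorem~\ref{thconuni} for the pair $(T,V)$ on $\mathscr{F}$ are verified via the computations \eqref{q2}--\eqref{PCeq2}, and the compression from $\mathscr{F}$ to $\mathscr{H}$ is carried out exactly along the lines you describe (equations \eqref{PCeq9}--\eqref{PCeq20}). One small correction to your description: the paper's $T$ does \emph{not} agree with $V$ on $\HDTzeros$ (compare the first columns of \eqref{vrep} and \eqref{trep}); rather, $\mathscr{H}$ is reducing for $T$ with $T|_{\mathscr{H}}=T_1$, which is what the compression argument actually needs.
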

\begin{proof}
Since $T_0$ is a contraction on $\mathscr{H}$, then we dilate $T_0$ to a unitary operator on the larger space $\mathscr{F}$ containing  $\mathscr{H}$. Let $U_{T_0}$ be the corresponding minimal unitary dilation of $T_0$ on $\mathscr{F}=\mathbf{H}^2_{\mathcal{D}_{T_0^*}}(\mathbb{D})\oplus\mathscr{H}\oplus\mathbf{H}^2_{\mathcal{D}_{T_0}}(\mathbb{D})$ and hence as in \eqref{dilationmat} we have the following block matrix representation of $U_{T_0}$ on $\mathscr{F}$:
\begin{align}\label{vrep}
	U_{T_0}=\begin{bmatrix}
		S_{\mathcal{D}_{T_0^*}}^*&0&0\\
		D_{T_{0}^*}P_{\mathcal{D}_{T_0^*}}&T_0&0\\
		-T_{0}^*P_{\mathcal{D}_{T_0^*}}&D_{T_0}&S_{\mathcal{D}_{T_0}}
	\end{bmatrix}:\begin{bmatrix}
		\HDTzeros\\
		\mathscr{H}\\
		\HDTzero
	\end{bmatrix}\to 
	\begin{bmatrix}
		\HDTzeros\\
		\mathscr{H}\\
		\HDTzero
	\end{bmatrix}.
\end{align}
Now we set $V:=U_{T_0}$. On the other hand we note that  $T_0=V_{T_0} |T_0|$ is the polar decomposition of $T_0$, where $|T_0|=(T_0^*T_0)^{1/2}$ and $V_{T_0}$ is an isometry from $\overline{Ran(T_0^*)}$ onto $\overline{Ran(T_0)}$. Therefore by using the hypothesis $\dim(\ker T_0)=\dim(\ker T_0^*)$, we  extend $V_{T_0}$ to a unitary operator on the full space $\mathscr{H}$. Moreover, as in \cite{Mor}, it is easy to observe that
\begin{align}\label{PCeq1}
	V_{T_0}D_{T_0}=D_{T_0^*}V_{T_0},~(1-|T_0|)=(1+|T_0|)^{-1}(1-T_0^*T_0), \text{ and } V_{T_0}-T_0=V_{T_0}(1-|T_0|).
\end{align} 
Our next aim is to extend the contraction $T_1:\mathscr{H}\to \mathscr{H}$ to the larger space $\mathscr{F}$ as a contractive operator. To that aim, we set an operator $T:\mathscr{F}\to \mathscr{F}$ whose block matrix representation on the space $\mathscr{F}$ is the following:
\begin{align}\label{trep}
	T:= \begin{bmatrix}
		S_{\mathcal{D}_{T_0^*}}^*&0&0\\
		0&T_1&0\\
		-V_{T_0}^*P_{\mathcal{D}_{T_0^*}}&0&S_{\mathcal{D}_{T_0}}
	\end{bmatrix}:\begin{bmatrix}
		\HDTzeros\\
		\mathscr{H}\\
		\HDTzero
	\end{bmatrix}\to 
	\begin{bmatrix}
		\HDTzeros\\
		\mathscr{H}\\
		\HDTzero
	\end{bmatrix}.
\end{align}
Then using the relations $S_{\mathcal{D}_{T_0^*}}S_{\mathcal{D}_{T_0^*}}^*=I-P_{\mathcal{D}_{T_0^*}}$, $S_{\mathcal{D}_{T_0}}S_{\mathcal{D}_{T_0}}^*=I-P_{\mathcal{D}_{T_0}}$, $V_{T_0}P_{\mathcal{D}_{T_0}}=P_{\mathcal{D}_{T_0^*}}V_{T_0}$, $P_{\mathcal{D}_{T_0^*}}S_{\mathcal{D}_{T_0^*}}=0$, and $P_{\mathcal{D}_{T_0}}S_{\mathcal{D}_{T_0}}=0$, it is easy to check that $T$ is a contraction on $\mathscr{F}$, and $T$ is also an extension of $T_1$. Moreover, using the above relations we also conclude
	\begin{align}
	\label{q2}|T|=&\begin{bmatrix}
		I&0&0\\
		0&|T_1|&0\\
		0&0&I
	\end{bmatrix}:
	\begin{bmatrix}
		\HDTzeros\\
		\mathscr{H}\\
		\HDTzero
	\end{bmatrix}\to 
	\begin{bmatrix}
		\HDTzeros\\
		\mathscr{H}\\
		\HDTzero
	\end{bmatrix},\\
	\label{q3}|T^*|=&\begin{bmatrix}
		I&0&0\\
		0&|T_1^*|&0\\
		0&0&I
	\end{bmatrix}:
	\begin{bmatrix}
		\HDTzeros\\
		\mathscr{H}\\
		\HDTzero
	\end{bmatrix}\to 
	\begin{bmatrix}
		\HDTzeros\\
		\mathscr{H}\\
		\HDTzero
	\end{bmatrix},\\
	\label{q4}	D_T=&\begin{bmatrix}
		0&0&0\\
		0&D_{T_1}&0\\
		0&0&0
	\end{bmatrix}:\begin{bmatrix}
		\HDTzeros\\
		\mathscr{H}\\
		\HDTzero
	\end{bmatrix}\to 
	\begin{bmatrix}
		\HDTzeros\\
		\mathscr{H}\\
		\HDTzero
	\end{bmatrix}.
\end{align}
Now we have a pair  $(T,V)$ of contractions on $\mathscr{F}=\mathbf{H}^2_{\mathcal{D}_{T_0^*}}(\mathbb{D})\oplus\mathscr{H}\oplus\mathbf{H}^2_{\mathcal{D}_{T_0}}(\mathbb{D})$ such that $V$ is a unitary operator. Next we show that the pair $(T,V)$ satisfies the conditions $(i)$ and $(ii)$ in Theorem~\ref{thconuni}. The block matrix representations \eqref{q2} and \eqref{q3} of $|T|$ and $|T^*|$ respectively implies that $\ker T=\ker T_1$ and $\ker T^*=\ker T_1^*$, and hence $\dim(\ker T)=\dim(\ker T^*)$ by using the hypothesis $(i)$. Furthermore, using the block matrix representations $\eqref{vrep}$ and $\eqref{trep}$ of $V$ and $T$ respectively we have the following block matrix representation $T-V$ on $\mathscr{F}$
\begin{equation}\label{PCeq2}
	T-V:= \begin{bmatrix}
		0&0&0\\
		-D_{T_0^*}P_{\mathcal{D}_{T_0^*}}&T_1-T_0&0\\
		(T_0^*-V_{T_0}^*)P_{\mathcal{D}_{T_0^*}}&-D_{T_0}&0
	\end{bmatrix},
\end{equation}
which by using the relations listed in \eqref{PCeq1} together with the hypothesis $(ii)$ and using the equation \eqref{q4} yield that each non-zero component of \eqref{PCeq2} are in Schatten-n class and hence $T-V\in\bnf$.
 Next we obtain the polar decomposition of $T$ from the polar decomposition of $T_1$. Indeed, note that $T_1=V_{T_1} |T_1|$ is the polar decomposition of $T_1$, where $|T_1|=(T_1^*T_1)^{1/2}$ and $V_{T_1}$ is an isometry from $\overline{Ran(T_1^*)}$ onto $\overline{Ran(T_1)}$. Then by using the hypothesis $\dim(\ker T_1)=\dim(\ker T_1^*)$, we  extend $V_{T_1}$ to a unitary operator on the full space $\mathscr{H}$. Next we set
\begin{align}\label{PCeq3}
	V_T:=&\begin{bmatrix}
		S_{\DTzeros}^* & 0 & 0\\
		0&V_{T_1}&0 & \\
		-V_{T_0}^*P_{\DTzeros}&0&S_{\DTzero}
	\end{bmatrix}:\begin{bmatrix}
		\HDTzeros\\
		\mathscr{H}\\
		\HDTzero
	\end{bmatrix}\to 
	\begin{bmatrix}
		\HDTzeros\\
		\mathscr{H}\\
		\HDTzero
	\end{bmatrix}.
\end{align}
Then using equations \eqref{trep}, \eqref{q2} and \eqref{PCeq3} we conclude that $T=V_T|T|$. Now, to apply Theorem~\ref{thconuni} in this context corresponding to the pair $(T,V)$ we need to find the contractive path as discussed earlier in \eqref{CTeq1} and hence we proceed in a similar way as in the proof of Theorem~\ref{thconuni} with an appropriate modification. Let $U_1$ and $U_0$ be the minimal unitary dilation and unitary extension of $T$ and $V$ respectively on the space $\mathcal{K}:=\HDTs\oplus\mathscr{F}\oplus\HDT$. Moreover, the block matrix representations of $U_1$ and $U_0$ on $\mathcal{K}$ are given below:
\begin{align*}
	U_1:=\begin{bmatrix}
		S_{\mathcal{D}_{T^*}}^*&0&0\\
		D_{T^*}P_{\mathcal{D}_{T^*}}&T&0\\
		-T^*P_{\mathcal{D}_{T^*}}&D_{T}&S_{\mathcal{D}_{T}}
	\end{bmatrix}:
	\begin{bmatrix}
		\HDTs\\
		\mathscr{F}\\
		\HDT
	\end{bmatrix}\to 
	\begin{bmatrix}
		\HDTs\\
		\mathscr{F}\\
		\HDT
	\end{bmatrix},
\end{align*}
and 
 \begin{align*}
	U_0:=\begin{bmatrix}
		S_{\mathcal{D}_{T^*}}^*&0&0\\
		0&V&0\\
		-V_{T}^*P_{\mathcal{D}_{T^*}}&0&S_{\mathcal{D}_{T}}
	\end{bmatrix}:\begin{bmatrix}
		\HDTs\\
		\mathscr{F}\\
		\HDT
	\end{bmatrix}\to 
	\begin{bmatrix}
		\HDTs\\
		\mathscr{F}\\
		\HDT
	\end{bmatrix}.
\end{align*}
Then using the fact that $T-V\in\bnf$ and using the relations listed in \eqref{PCeq1} with $T_0$ is replaced by $T$ we conclude that $U_1-U_0\in \mathcal{B}_n(\mathcal{K})$. Now we have a pair $(U_1,U_0)$ of unitary operators on $\mathcal{K}$ and as in \eqref{dilationmatalt8} the block matrix representation of $U_1U_0^*$ with respect to the decomposition $\mathcal{K}=\HDTs\oplus(\mathscr{F}\oplus\DT)\oplus S_{\mathcal{D}_T}\HDT$ is the following:
	\begin{equation}\label{q1}
	\begin{split}
		U_1U_0^*:= \begin{bmatrix}
			I&0&0\\
			0&\mathcal{M}&0\\
			0&0&I
		\end{bmatrix}:\begin{bmatrix}
			\HDTs\\
			\mathscr{F}\oplus\mathcal{D}_T\\
			S_{\mathcal{D}_{T}}\HDT
		\end{bmatrix}\to \begin{bmatrix}
			\HDTs\\
			\mathscr{F}\oplus\mathcal{D}_T\\
			S_{\mathcal{D}_{T}}\HDT,
		\end{bmatrix},
	\end{split}
\end{equation} 
where $\mathcal{M}=\begin{bmatrix}
	TV^*&-D_{T^*}V_T\big|_{\mathcal{D}_T}\\
	D_TV^*&T^*V_T\big|_{\mathcal{D}_T}\\
\end{bmatrix}:\begin{bmatrix}
	\mathscr{F}\\
	\mathcal{D}_T
\end{bmatrix}\to \begin{bmatrix}
	\mathscr{F}\\
	\mathcal{D}_T
\end{bmatrix}$ is a unitary operator. Therefore using the similar argument as in the proof of Theorem~\ref{thconuni} and using the fact that  $U_1-U_0\in \mathcal{B}_n(\mathcal{K})$, we conclude that there exists a self-adjoint operator $M:\mathscr{F}\oplus\DT\to\mathscr{F}\oplus\DT$ with $\sigma(M)\subseteq (-\pi,\pi]$ such that $M\in\mathcal{B}_n(\mathscr{F}\oplus\DT)$ and $\mathcal{M}=e^{iM}$. Moreover, if we set 
\begin{equation}\label{PCeq6}
	A: =\begin{bmatrix}
		0&0&0\\
		0&M&0\\
		0&0&0
	\end{bmatrix}:\begin{bmatrix}
		\HDTs\\
		\mathscr{F}\oplus\DT\\
		S_{\DT}\HDT
	\end{bmatrix}\to 
	\begin{bmatrix}
		\HDTs\\
		\mathscr{F}\oplus\DT\\
		S_{\DT}\HDT
	\end{bmatrix},
\end{equation} 
then $A$ is a self-adjoint operator on $\mathcal{K}=\HDTs\oplus(\mathscr{F}\oplus\DT)\oplus S_{\mathcal{D}_T}\HDT$ such that $\sigma(A)\subseteq (-\pi,\pi]$, $A\in \mathcal{B}_n(\mathcal{K})$,~$U_1U_0^*=e^{iA}$ and hence  $U_1=e^{iA}U_0$.
From the block matrix representation \eqref{q4} of $D_T$, it is easy to observe that $\mathcal{D}_T=0\oplus\mathcal{D}_{T_1}\oplus 0$. Keeping this information in mind we have the following block matrix representation of the operator $\mathcal{M}$ and it is necessary to get the contractive path in the sequel.
\begin{align}\label{PCeq11}
\nonumber	\mathcal{M}=&\begin{bmatrix}
		I&0&0&0\\
		0&T_1T_0^*&T_1D_{T_0}P_{\mathcal{D}_{T_0}}&-D_{T_1^*}V_{T_1}\\
		0&-V_{T_0}^*D_{T_0^*}& S_{\DTzero}S_{\DTzero}^*+V_{T_0}^*T_0P_{\mathcal{D}_{T_0}}&0\\
		0&D_{T_1}T_0^*&D_{T_1}D_{T_0}P_{\mathcal{D}_{T_0}}&T_1^*V_{T_1}
	\end{bmatrix}:
	\begin{bmatrix}
		\HDTzeros\\
		\mathscr{H}\\
		\HDTzero\\
		\mathcal{D}_{T_1}	
	\end{bmatrix}\to \begin{bmatrix}
		\HDTzeros\\
		\mathscr{H}\\
		\HDTzero\\
		\mathcal{D}_{T_1}	
	\end{bmatrix}\\
\nonumber	=&\begin{bmatrix}
		I&0&0&0\\
		0&T_1T_0^*&T_1D_{T_0}P_{\mathcal{D}_{T_0}}&-D_{T_1^*}V_{T_1}\\
		0&-V_{T_0}^*D_{T_0^*}& |T_0|P_{\mathcal{D}_{T_0}}+(I-P_{\mathcal{D}_{T_0}})&0\\
		0&D_{T_1}T_0^*&D_{T_1}D_{T_0}P_{\mathcal{D}_{T_0}}&T_1^*V_{T_1}
	\end{bmatrix}:
	\begin{bmatrix}
		\HDTzeros\\
		\mathscr{H}\\
		\HDTzero\\
		\mathcal{D}_{T_1}	
	\end{bmatrix}\to \begin{bmatrix}
		\HDTzeros\\
		\mathscr{H}\\
		\HDTzero\\
		\mathcal{D}_{T_1}	
	\end{bmatrix}\\
	=&\begin{bmatrix}
		I&0&0&0&0\\
		0&T_1T_0^*&T_1D_{T_0}&0&-D_{T_1^*}V_{T_1}\\
		0&-V_{T_0}^*D_{T_0^*}& |T_0|&0&0\\
		0&0&0&I&0\\
		0&D_{T_1}T_0^*&D_{T_1}D_{T_0}&0&T_1^*V_{T_1}
	\end{bmatrix}:
	\begin{bmatrix}
		\HDTzeros\\
		\mathscr{H}\\
		\mathcal{D}_{T_0}\\
		S_{\DTzero}\HDTzero\\
		\mathcal{D}_{T_1}	
	\end{bmatrix}\to \begin{bmatrix}
		\HDTzeros\\
		\mathscr{H}\\
		\mathcal{D}_{T_0}\\
		S_{\DTzero}\HDTzero\\
		\mathcal{D}_{T_1}	
	\end{bmatrix},
\end{align}
where at the second equality we use the relation $S_{\DTzero}S_{\DTzero}^*=I-P_{\mathcal{D}_{T_0}}$, and in the last equality we use the relation $V_{T_0}D_{T_0}=D_{T_0^*}V_{T_0}$. 
Therefore applying Theorem~\ref{thconuni} corresponding to the pair $(T,V)$ we conclude that  for $\phi\in \mathcal{F}_n(\mathbb{T})$, 
\begin{align}\label{PCeq4}
	\left\{\phi(T)-\phi(V)-\sum_{k=1}^{n-1}\dfrac{1}{k!}\dfrac{d^k}{ds^k}\Big|_{s=0}\phi(V_s) \right\}\in\boh,
\end{align} 
and there exists an $L^1(\mathbb{T})$-function  $\xi_n$ 
(unique up to additive constant) depend only on $n,T$ and $V$ such that 
\begin{align}\label{PCeq5}
	\textup{Tr}\left\{\phi(T)-\phi(V)-\sum_{k=1}^{n-1}\dfrac{1}{k!}\dfrac{d^k}{ds^k}\Big|_{s=0}\phi(V_s) \right\}=\int_{0}^{2\pi}\dfrac{d^n}{dt^n}\{\phi(e^{it})\}\xi_n(t)dt,
\end{align}
where $V_s=\PF e^{isM}V$, $s\in [0,1]$. Our next aim is to show that for $\phi\in \mathcal{F}_n(\mathbb{T})$,
\begin{align}\label{PCeq7}
	\textup{Tr}\left\{\phi(T_1)-\phi(T_0)-\sum_{k=1}^{n-1}\dfrac{1}{k!}\dfrac{d^k}{ds^k}\Big|_{s=0}\phi(T_s) \right\}=\textup{Tr}\left\{\phi(T)-\phi(V)-\sum_{k=1}^{n-1}\dfrac{1}{k!}\dfrac{d^k}{ds^k}\Big|_{s=0}\phi(V_s) \right\},
\end{align}
where \begin{equation}\label{PCeq8}
	T_s=P_{\mathscr{H}}V_s\Big|_{\mathscr{H}}=\PH e^{isB}\begin{bmatrix}
		0\\
		T_0\\
		D_{T_0}\\
		0
	\end{bmatrix}=\PH e^{isM}W,\quad \text{where} \quad W:=V\Big|_{\mathscr{H}}=\begin{bmatrix}
		0\\
		T_0\\
		D_{T_0}\\
		0
	\end{bmatrix},
\end{equation}
is a bounded  operator from $\hil$ to $\mathscr{F}\oplus\DT$. To proceed further,  it is enough to deal with the monomials, that is functions like $\phi_q(z)=z^q,~ z\in \mathbb{T}$ and $q\in \mathbb{Z}$. Now if $q\in \mathbb{N}$, then by using Lemma \ref{lm1}, we conclude for $1\leq k\leq n-1$ that
\begin{align}\label{PCeq9}
	\nonumber\dfrac{d^k}{ds^k}\Big|_{s=0}\{\phi_q(V_s)\}=&\sum_{r=1}^{k}~\sum_{\substack{\alpha_0+\cdots+\alpha_r=q-r\\\alpha_0,\ldots,\alpha_r\geq 0}}~\sum_{\substack{l_1+\cdots+l_{r}=k\\l_1,\ldots,l_{r}\geq 1}}\dfrac{k!}{l_1!\cdots l_r!}~V^{\alpha_0}\PF \left((iM)^{l_1}V\right)V^{\alpha_1}\\
	&\hspace*{3in}\times\cdots \times\PF \left((iM)^{l_{r}}V\right)V^{\alpha_r},
\end{align}
and 
\begin{align}\label{PCeq10}
	\nonumber\dfrac{d^k}{ds^k}\Big|_{s=0}\{\phi_q(T_s)\}=&\sum_{r=1}^{k}~\sum_{\substack{\alpha_0+\cdots+\alpha_r=q-r\\\alpha_0,\ldots,\alpha_r\geq 0}}~\sum_{\substack{l_1+\cdots+l_{r}=k\\l_1,\ldots,l_{r}\geq 1}}\dfrac{k!}{l_1!\cdots l_r!}~{T_0}^{\alpha_0}\PH \left((iM)^{l_1}W\right){T_0}^{\alpha_1}\\
	&\hspace*{3in}\times\cdots \times\PH \left((iM)^{l_{r}}W\right){T_0}^{\alpha_r}.
\end{align}
Now we denote $X_r:=V^{\alpha_0}\PF \left((iM)^{l_1}V\right)V^{\alpha_1}\cdots \PF \left((iM)^{l_{r}}V\right)V^{\alpha_r}$, where $\alpha_j\geq 0$ for $0\leq j\leq r$, and $l_{j'}\geq 1$ for $1\leq j'\leq r$, and $r\geq 1$. Next we require the block matrix representations of $M^n$ and $V^n$ on the space $\mathscr{F}\oplus\DT:=\HDTzeros\oplus\mathscr{H}\oplus \mathcal{D}_{T_0}\oplus S_{\DTzero}\HDTzero\oplus\mathcal{D}_{T_1}$ for any $n\in \mathbb{N}$ and they are the following:
\begin{align}\label{PCeq12}
	M^n=	\begin{bmatrix}
		0&0&0&0&0\\
		0&\bm{\ast}&\bm{\ast}&0&\bm{\ast}\\
		0&\bm{\ast}&\bm{\ast}&0&\bm{\ast}\\
		0&0&0&0&0\\
		0&\bm{\ast}&\bm{\ast}&0&\bm{\ast}
	\end{bmatrix}:
	\begin{bmatrix}
		\HDTzeros\\
		\mathscr{H}\\
		\mathcal{D}_{T_0}\\
		S_{\DTzero}\HDTzero\\
		\mathcal{D}_{T_1}	
	\end{bmatrix}\to \begin{bmatrix}
		\HDTzeros\\
		\mathscr{H}\\
		\mathcal{D}_{T_0}\\
		S_{\DTzero}\HDTzero\\
		\mathcal{D}_{T_1}	
	\end{bmatrix},
\end{align}
and 
\begin{align}\label{PCeq13}
\nonumber	V^n&=\begin{bmatrix}
		{S_{\DTzeros}^{*n}}&0&0\\
		\bm{\ast}&T_0^n&0\\
		\bm{\ast}&L_n&S_{\DTzero}^n
	\end{bmatrix}:
	\begin{bmatrix}
		\HDTzeros\\
		\mathscr{H}\\
		\HDTzero	
	\end{bmatrix}\to \begin{bmatrix}
		\HDTzeros\\
		\mathscr{H}\\
		\HDTzero		
	\end{bmatrix}\\
	&=\begin{bmatrix}
		{S_{\DTzeros}^{*n}}&0&0&0&0\\
		\bm{\ast}&T_0^n&0&0&0\\
		\bm{\ast}&D_{T_0}T_0^{n-1}&0&0&0\\
		\bm{\ast}&\bm{\ast}&S_{\DTzero}^n&S_{\DTzero}^n&0\\
		0&0&0&0&0\\
	\end{bmatrix}:
	\begin{bmatrix}
		\HDTzeros\\
		\mathscr{H}\\
		\mathcal{D}_{T_0}\\
		S_{\DTzero}\HDTzero\\
		\mathcal{D}_{T_1}	
	\end{bmatrix}\to \begin{bmatrix}
		\HDTzeros\\
		\mathscr{H}\\
		\mathcal{D}_{T_0}\\
		S_{\DTzero}\HDTzero\\
		\mathcal{D}_{T_1}	
	\end{bmatrix},
\end{align}
where $\ast$ stands for some non-zero entries and $L_n=D_{T_0}T_0^{n-1}+S_{\DTzero}L_{n-1}, L_0=0, n\geq 1$. Therefore using the structures \eqref{PCeq12} and \eqref{PCeq13} of $M^n$ and $V^n$ respectively we conclude that
\begin{align}\label{PCeq14}
\nonumber	\PH X_1\Big|_{\mathscr{H}}=&\PH V^{\alpha_0}\PF ((iM)^{l_1}V)V^{\alpha_1}\Big|_{\mathscr{H}}=\PH V^{\alpha_0}\PF ((iM)^{l_1} P_{S_{\DTzero}\HDTzero}^\perp V)V^{\alpha_1}\Big|_{\mathscr{H}}\\
	\nonumber=&\PH V^{\alpha_0}\PF ((iM)^{l_1} P_{\mathscr{H}\oplus\mathcal{D}_{T_0}} V)V^{\alpha_1}\Big|_{\mathscr{H}}=\PH V^{\alpha_0}P_{\mathscr{H}\oplus\mathcal{D}_{T_0}} ((iM)^{l_1} P_{\mathscr{H}\oplus\mathcal{D}_{T_0}} V)V^{\alpha_1}\Big|_{\mathscr{H}}\\
	=& T_0^{\alpha_0}\PH((iM)^{l_1} P_{\mathscr{H}\oplus\mathcal{D}_{T_0}} V)V^{\alpha_1}\Big|_{\mathscr{H}}=T_0^{\alpha_0}\PH ((iM)^{l_1}W)T_0^{\alpha_1}.
\end{align}
Again for $X_2$, we have
\begin{align}\label{PCeq15}
\nonumber	\PH X_2\Big|_{\mathscr{H}}=&\PH V^{\alpha_0}\PF ((iM)^{l_1}V)V^{\alpha_1}\PF((iM)^{l_2}V)V^{\alpha_2}\Big|_{\mathscr{H}}\\
\nonumber	=&\PH V^{\alpha_0}P_{\mathscr{H}\oplus\mathcal{D}_{T_0}} ({(iM)^{l_1}}P_{S_{\DTzero}\HDTzero}^\perp V)V^{\alpha_1}P_{\mathscr{H}\oplus\mathcal{D}_{T_0}}((iM)^{l_2}P_{S_{\DTzero}\HDTzero}^\perp V)V^{\alpha_2}\Big|_{\mathscr{H}}\\
\nonumber	=&\PH V^{\alpha_0}\PH ((iM)^{l_1}P_{S_{\DTzero}\HDTzero}^\perp V)V^{\alpha_1}\PH((iM)^{l_2}P_{S_{\DTzero}\HDTzero}^\perp V)V^{\alpha_2}\Big|_{\mathscr{H}}\\
	=&T_0^{\alpha_0}\PH ((iM)^{l_1}W)T_0^{\alpha_1}\PH ((iM)^{l_2}W)T_0^{\alpha_2},
\end{align}
and similarly for $r\geq 3$ we have
\begin{align}\label{PCeq16}
	\PH X_r\Big|_{\mathscr{H}}={T_0}^{\alpha_0}\PH \left((iB)^{l_1}W\right){T_0}^{\alpha_1}\cdots \PH \left((iB)^{l_{r}}W\right){T_0}^{\alpha_r}.
\end{align}
Therefore combining equations \eqref{PCeq9}, \eqref{PCeq10}, \eqref{PCeq14}, \eqref{PCeq15} and \eqref{PCeq16} we get
\begin{align}\label{PCeq17}
	\phi_q(T_1)-\phi_q(T_0)-\sum_{k=1}^{n-1}\dfrac{1}{k!}\dfrac{d^k}{ds^k}\Big|_{s=0}\phi_q(T_s) =\PH\left(\phi_q(T)-\phi_q(V)-\sum_{k=1}^{n-1}\dfrac{1}{k!}\dfrac{d^k}{ds^k}\Big|_{s=0}\phi_q(V_s)\right)\Bigg|_{\mathscr{H}},
\end{align}
for all $q\in \mathbb{N}$. Again by using the facts
$\dfrac{d^k}{ds^k}\Big|_{s=0}\{\phi_q(T_s)\} =\left(\dfrac{d^k}{ds^k}\Big|_{s=0}\{\phi_{-q}(T_s)\}\right)^*$ and 
$\dfrac{d^k}{ds^k}\Big|_{s=0}\{\phi_q(V_s)\} =\left(\dfrac{d^k}{ds^k}\Big|_{s=0}\{\phi_{-q}(V_s)\}\right)^*$, and applying Lemma \ref{lm1} together with the same analysis as above we also get for $q\in \mathbb{Z},~q<0$ that
\begin{align}\label{PCeq18}
	\phi_q(T_1)-\phi_q(T_0)-\sum_{k=1}^{n-1}\dfrac{1}{k!}\dfrac{d^k}{ds^k}\Big|_{s=0}\phi_q(T_s) =\PH\left(\phi_q(T)-\phi_q(V)-\sum_{k=1}^{n-1}\dfrac{1}{k!}\dfrac{d^k}{ds^k}\Big|_{s=0}\phi_q(V_s)\right)\Bigg|_{\mathscr{H}}.
\end{align}
Therefore using \eqref{PCeq4}, \eqref{PCeq17} and \eqref{PCeq18} we conclude 
$\left\{\phi_q(T_1)-\phi_q(T_0)-\sum\limits_{k=1}^{n-1}\dfrac{1}{k!}\dfrac{d^k}{ds^k}\Big|_{s=0}\phi_q(T_s)\right\}$
is a trace class operator and 
\begin{equation} \label{PCeq19}
	\begin{split}
		&\textup{Tr}\left\{\phi_q(T_1)-\phi_q(T_0)-\sum_{k=1}^{n-1}\dfrac{1}{k!}\dfrac{d^k}{ds^k}\Big|_{s=0}\phi_q(T_s)\right\}\\ &\hspace{1in}=\textup{Tr}\left\{\PH\left(\phi_q(T)-\phi_q(V)-\sum_{k=1}^{n-1}\dfrac{1}{k!}\dfrac{d^k}{ds^k}\Big|_{s=0}\phi_q(V_s)\right)\Bigg|_{\mathscr{H}}\right\},\forall q\in \mathbb{Z}.
	\end{split}
\end{equation}
Furthermore, analyzing the structures of $M^n$ and $V^n$ as in \eqref{PCeq12} and \eqref{PCeq13} respectively we conclude
\begin{align*}
	P_{\mathscr{F}\ominus\mathscr{H}}X_1\Big|_{\mathscr{F}\ominus\mathscr{H}}=&P_{\mathscr{F}\ominus\mathscr{H}}V^{\alpha_0}\PF ((iM)^{l_1}V)V^{\alpha_1}\Big|_{\mathscr{F}\ominus\mathscr{H}}\\
	=&P_{\mathscr{F}\ominus\mathscr{H}}V^{\alpha_0}P_{\mathscr{H}\oplus\mathcal{D}_{T_0}} ((iM)^{l_1} P_{\mathscr{H}\oplus\mathcal{D}_{T_0}}V) V^{\alpha_1}\Big|_{\mathscr{F}\ominus\mathscr{H}}\\
	=&P_{\mathscr{F}\ominus\mathscr{H}}V^{\alpha_0}P_{\mathscr{H}\oplus\mathcal{D}_{T_0}} ((iM)^{l_1} P_{\mathscr{H}\oplus\mathcal{D}_{T_0}}V) V^{\alpha_1}P_{\HDTzeros}\Big|_{\mathscr{F}\ominus\mathscr{H}}\\
	=&P_{\HDTzero}V^{\alpha_0}P_{\mathscr{H}\oplus\mathcal{D}_{T_0}} ((iM)^{l_1} P_{\mathscr{H}\oplus\mathcal{D}_{T_0}}V) V^{\alpha_1}P_{\HDTzeros}\Big|_{\mathscr{F}\ominus\mathscr{H}},
\end{align*}
and for $r\geq 2$,
\begin{align*}
	 P_{\mathscr{F}\ominus\mathscr{H}}X_r\Big|_{\mathscr{F}\ominus\mathscr{H}}=&P_{\mathscr{F}\ominus\mathscr{H}}V^{\alpha_0}\PF ((iM)^{l_1}V)V^{\alpha_1}\cdots\PF((iM)^{l_{r-1}}V)V^{\alpha_{r-1}}\PF((iM)^{l_{r}}V)V^{\alpha_{r}}\Big|_{\mathscr{F}\ominus\mathscr{H}}\\
	 =&P_{\mathscr{F}\ominus\mathscr{H}}V^{\alpha_0}P_{\mathscr{H}\oplus\mathcal{D}_{T_0}} ((iM)^{l_1} P_{\mathscr{H}\oplus\mathcal{D}_{T_0}}V) V^{\alpha_1}P_{\mathscr{H}\oplus\mathcal{D}_{T_0}}\cdot\cdot P_{\mathscr{H}\oplus\mathcal{D}_{T_0}}((iM)^{l_{r-1}} P_{\mathscr{H}\oplus\mathcal{D}_{T_0}}V)\\
	&\hspace*{.5in}\times V^{\alpha_{r-1}}P_{\mathscr{H}\oplus\mathcal{D}_{T_0}}((iM)^{l_{r}} P_{\mathscr{H}\oplus\mathcal{D}_{T_0}}V) V^{\alpha_{r}}\Big|_{\mathscr{F}\ominus\mathscr{H}}\\
	 =&P_{\HDTzero}V^{\alpha_0}P_{\mathscr{H}\oplus\mathcal{D}_{T_0}} ((iM)^{l_1} P_{\mathscr{H}\oplus\mathcal{D}_{T_0}}V) V^{\alpha_1}P_{\mathscr{H}\oplus\mathcal{D}_{T_0}}\cdot \cdot P_{\mathscr{H}\oplus\mathcal{D}_{T_0}}((iM)^{l_{r-1}} P_{\mathscr{H}\oplus\mathcal{D}_{T_0}}V)\\
	&\hspace{.5in}\times V^{\alpha_{r-1}}P_{\mathscr{H}\oplus\mathcal{D}_{T_0}}((iM)^{l_{r}} P_{\mathscr{H}\oplus\mathcal{D}_{T_0}}V) V^{\alpha_{r}}P_{\HDTzeros}\Big|_{\mathscr{F}\ominus\mathscr{H}},
\end{align*}
which implies that the operator $P_{\mathscr{F}\ominus\mathscr{H}}\Bigg(\phi_q(T)-\phi_q(V)-\sum\limits_{k=1}^{n-1}\dfrac{1}{k!}\dfrac{d^k}{ds^k}\Big|_{s=0}\phi_q(V_s)\Bigg)\Bigg|_{\mathscr{F}\ominus\mathscr{H}}$ maps $\HDTzeros\oplus \textbf{0}\oplus \textbf{0}$ to $\textbf{0}\oplus \textbf{0}\oplus\HDTzero$ for $q\in \mathbb{N}$ and the operator  $P_{\mathscr{F}\ominus\mathscr{H}}\Bigg(\phi_q(T)-\phi_q(V)$ $-\sum\limits_{k=1}^{n-1}\dfrac{1}{k!}\dfrac{d^k}{ds^k}\Big|_{s=0}\phi_q(V_s)\Bigg)\Bigg|_{\mathscr{F}\ominus\mathscr{H}}$ maps $ \textbf{0}\oplus \textbf{0}\oplus\HDTzero$ to $\HDTzeros\oplus \textbf{0}\oplus \textbf{0}$ for $q\in \mathbb{Z},~q<0$. The above observations immediately yields that 
\begin{align}\label{PCeq20}
	\textup{Tr}\left\{P_{\mathscr{F}\ominus\mathscr{H}}\left(\phi_q(T)-\phi_q(V)-\sum_{k=1}^{n-1}\dfrac{1}{k!}\dfrac{d^k}{ds^k}\Big|_{s=0}\phi_q(V_s)\right)\Bigg|_{\mathscr{F}\ominus\mathscr{H}} \right\}=0, ~~\forall q\in \mathbb{Z}.
\end{align}
Therefore combining equations \eqref{PCeq19} and \eqref{PCeq20} we get
\begin{equation*}
	\textup{Tr}\left\{\phi_q(T_1)-\phi_q(T_0)-\sum_{k=1}^{n-1}\dfrac{1}{k!}\dfrac{d^k}{ds^k}\Big|_{s=0}\phi_q(T_s)\right\} =\textup{Tr}\left\{\phi_q(T)-\phi_q(V)-\sum_{k=1}^{n-1}\dfrac{1}{k!}\dfrac{d^k}{ds^k}\Big|_{s=0}\phi_q(V_s)\right\}
\end{equation*}
for all $ q\in \mathbb{Z}$ and hence \eqref{PCeq7} follows. Thus the conclusion of the theorem follows by combining equations \eqref{PCeq5} and \eqref{PCeq7}. This completes the proof. 

\end{proof}

\section{Higher order Trace formula for pair of maximal dissipative operators}\label{maxdis}
In this section our main aim is to prove the trace formula for pairs of maximal dissipative operators as an application of our main theorem in the previous section.   
We start with the section by recalling the definition  of dissipative operator. Let $A:\hil\to\hil$ be a linear operator (need not be bounded) with dense domain $\textup{Dom}(A)$ called dissipative if $\textup{Im}\langle Ah,h\rangle\leq 0$ for all $h\in\textup{Dom}(A)$. A dissipative operator is called maximal if it has no proper dissipative extension. It is well known that the Cayley transform of a maximal dissipative operator $A$ is a contraction $T:\hil\to\hil$ given by $T=-(A+i)(A-i)^{-1}$ such that $\ker T=\ker (A+i)$ and $\ker T^*=\ker (A^*-i)$. Furthermore, the following pieces of information are also enlisted in \cite{Mor}, but for reader convenience and the self-containment of the article, we are providing it here as well.
\begin{align}
\label{MDeq1}	&D_{T}=2|(-\textup{Im} A)^{1/2}(A-i)^{-1}|,\quad D_{T^*}=2|(-\textup{Im} A)^{1/2}(A^*+i)^{-1}|,\\
\label{MDeq2}	&\mathcal{D}_{T}=\overline{\left( (A^*+i)^{-1}(\textup{Im} A)\hil  \right)},\quad \mathcal{D}_{T^*}=\overline{\left( (A-i)^{-1}(\textup{Im} A)\hil \right)}.
\end{align}
In the case of dissipative operator we need a different class of functions different from the class considered in the last two sections. Let us consider the following class:
\[\mathcal{R}_n:=\left\{ \psi:\mathbb{R}\to\mathbb{C} \text{ such that } \psi(\lambda)=\phi\left(\dfrac{i+\lambda}{i-\lambda}\right) \text{ for some } \phi\in\mathcal{F}_n(\mathbb{T})  \right\}.\] 
Next we define $\psi_{\pm}$ using $\phi_{\pm}$ in a similar way as we have done in \eqref{INTeq1} and hence we obtain the decomposition $\psi(\lambda)=\psi_+(\lambda)+\psi_-(-\lambda)$. In other words, if $\psi\in\mathcal{R}_n$, then $\psi(\lambda)=\phi\left(\dfrac{i+\lambda}{i-\lambda}\right)$ for some $\phi\in\mathcal{F}_n(\mathbb{T})$, and 
\begin{align*}
	\psi_+(\lambda)=\phi_+\left(\dfrac{i+\lambda}{i-\lambda}\right) \text{ and }  \psi_-(\lambda)=\phi_-\left(\dfrac{i+\lambda}{i-\lambda}\right).
\end{align*}
Now we set 
\begin{align*}
	\psi_+(A)=\phi_+(T), \psi_+(-A^*)=\phi_-(T^*) \text{, and } \psi(A)=\psi_+(A)+\psi_-(-A^*). 
\end{align*}
The following lemma is essential to prove the main theorem in this section.
\begin{lma}\label{lm2}
	Let $\psi\in\mathcal{R}_n$ be such that $\psi(\lambda)=\phi\left(\dfrac{i+\lambda}{i-\lambda}\right)$ for some $\phi\in\mathcal{F}_n(\mathbb{T})$. Now if we substitute $e^{it}=\dfrac{i+\lambda}{i-\lambda}$, then $\phi(e^{it})=\psi(\lambda),~\lambda=-\tan\dfrac{t}{2}$, and for all $1\leq q\leq n-1$,
	\begin{align}\label{MDeq3}
		\dfrac{d^q}{dt^q}\{\phi(e^{it})\}=\left(\sum_{k=0}^{q-1}p_{k,q}(\lambda)~\psi^{(q-k)}(\lambda)\right)\dfrac{d\lambda}{dt},
	\end{align}
 where $p_{k,q}$ are polynomials in $\lambda$ of degree $(2(q-1)-k)$ and it is given recursively as follows
	\begin{align*}
		p_{k,q}(\lambda)=\begin{cases}
			(-1/2)(1+\lambda^2)~p_{0,q-1}(\lambda) &\text{ for } k=0,\\
			(-1/2)\left\{(1+\lambda^2)\left(p_{k,q-1}(\lambda)+p^{(1)}_{k-1,q-1}(\lambda)\right)+2\lambda p_{k-1,q-1}(\lambda) \right\} &\text{ for } 1\leq k\leq q-2,\\
			(-1/2)\left[(1+\lambda^2)p^{(1)}_{q-2,q-1}(\lambda)+2\lambda p_{q-2,q-1}(\lambda) \right]&\text{ for } k=q-1,
		\end{cases}
	\end{align*} 
and $p_{0,1}(\lambda)=1$.
\end{lma}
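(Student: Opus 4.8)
The plan is to reduce everything to a single change of variables and then run an induction on $q$ driven by the chain rule. First I would make the substitution explicit. From $e^{it}=\frac{i+\lambda}{i-\lambda}$ one solves $\lambda=\frac{i(e^{it}-1)}{e^{it}+1}$; multiplying numerator and denominator by $e^{-it/2}$ gives $\lambda=i\cdot\frac{e^{it/2}-e^{-it/2}}{e^{it/2}+e^{-it/2}}=i\cdot i\tan\frac t2=-\tan\frac t2$, which is the asserted relation $\lambda=-\tan\frac t2$. By the very definition of $\mathcal{R}_n$ we have $\phi(e^{it})=\phi\!\left(\frac{i+\lambda}{i-\lambda}\right)=\psi(\lambda)$, so $\phi(e^{it})$ is literally $\psi$ composed with $\lambda=\lambda(t)$. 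Differentiating $\lambda=-\tan\frac t2$ yields the crucial identity $\frac{d\lambda}{dt}=-\frac12\sec^2\frac t2=-\frac12(1+\lambda^2)$, equivalently the operator identity $\frac{d}{dt}=-\frac12(1+\lambda^2)\frac{d}{d\lambda}$. For $q=1$ the chain rule gives $\frac{d}{dt}\{\phi(e^{it})\}=\psi'(\lambda)\frac{d\lambda}{dt}$, which is \eqref{MDeq3} with $p_{0,1}(\lambda)=1$; this is the base case.

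Next I would prove \eqref{MDeq3} by induction on $q$. Assuming the formula at level $q$, namely $\frac{d^q}{dt^q}\{\phi(e^{it})\}=\big(\sum_{k=0}^{q-1}p_{k,q}(\lambda)\psi^{(q-k)}(\lambda)\big)\frac{d\lambda}{dt}$, I apply $\frac{d}{dt}=-\frac12(1+\lambda^2)\frac{d}{d\lambda}$ once more. Writing $g(\lambda):=\frac{d\lambda}{dt}=-\frac12(1+\lambda^2)$, so that $g'(\lambda)=-\lambda$, the product rule gives
\[
\frac{d^{q+1}}{dt^{q+1}}\{\phi(e^{it})\}=g(\lambda)\left[g'(\lambda)\sum_{k=0}^{q-1}p_{k,q}\psi^{(q-k)}+g(\lambda)\sum_{k=0}^{q-1}\big(p_{k,q}'\psi^{(q-k)}+p_{k,q}\psi^{(q-k+1)}\big)\right].
\]
The heart of the argument, and the step I expect to be the main obstacle, is then to collect the coefficient of each derivative $\psi^{(m)}$ inside the bracket and re-index so as to recover exactly the three-case recursion in the statement. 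Substituting $g'=-\lambda$ and $g=-\frac12(1+\lambda^2)$, the coefficient of $\psi^{(m)}$ receives a contribution $-\lambda\,p_{q-m,q}$ and $-\frac12(1+\lambda^2)p_{q-m,q}'$ from the first two sums (valid for $1\le m\le q$) and a contribution $-\frac12(1+\lambda^2)p_{q+1-m,q}$ from the shifted third sum (valid for $2\le m\le q+1$).

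Writing $j=q+1-m$ for the first index of $p_{\cdot,q+1}$, the top derivative $m=q+1$ (i.e.\ $j=0$) sees only the third sum and produces $p_{0,q+1}=-\frac12(1+\lambda^2)p_{0,q}$; the bottom derivative $m=1$ (i.e.\ $j=q$) sees only the first two and produces $p_{q,q+1}=-\frac12[(1+\lambda^2)p_{q-1,q}'+2\lambda p_{q-1,q}]$; and for intermediate indices $1\le j\le q-1$ all three combine into $p_{j,q+1}=-\frac12[(1+\lambda^2)(p_{j,q}+p_{j-1,q}')+2\lambda p_{j-1,q}]$. These are precisely the three branches of the recursion (with the shift $q\mapsto q+1$), so the induction closes. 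The delicate point is bookkeeping the two boundary values of $m$ where one of the sums drops out; getting these endpoints right is what forces the split into the $k=0$, $1\le k\le q-1$, and $k=q$ cases. Finally I would verify the degree claim $\deg p_{k,q}=2(q-1)-k$ by a parallel induction: the base $p_{0,1}=1$ has degree $0$, and in each branch of the recursion multiplication by $(1+\lambda^2)$ raises degree by $2$ while differentiation lowers it by $1$, so a short check shows every surviving term in $p_{k,q+1}$ has degree $2q-k=2((q+1)-1)-k$, with no cancellation of leading coefficients. This completes the proof.
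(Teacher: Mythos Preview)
Your proposal is correct and follows essentially the same approach as the paper's proof: both argue by induction on $q$, apply the chain rule via $\frac{d}{dt}=-\frac12(1+\lambda^2)\frac{d}{d\lambda}$ to the level-$q$ formula, and then regroup the coefficients of $\psi^{(m)}$ to read off the three-branch recursion. Your write-up is a bit more explicit (you derive $\lambda=-\tan\frac t2$ and isolate $g(\lambda)=\frac{d\lambda}{dt}$, $g'(\lambda)=-\lambda$ before regrouping), but the underlying computation and the handling of the boundary indices $k=0$ and $k=q$ are identical to the paper's.
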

\begin{proof}
	We prove the identity \eqref{MDeq3} by principle of mathematical induction. For $q=1$, $\dfrac{d}{dt}\{\phi(e^{it})\}=\left(\dfrac{d}{d\lambda}\{\psi(\lambda)\}\right)\dfrac{d\lambda}{dt}$ and hence \eqref{MDeq3} is true for $q=1$. Suppose \eqref{MDeq3} is true for $q=m<n-1$, that is
	\begin{align*}
		\dfrac{d^m}{dt^m}\{\phi(e^{it})\}=&\left(\sum_{k=0}^{m-1}p_{k,m}(\lambda)\psi^{(m-k)}(\lambda)\right)\dfrac{d\lambda}{dt}.
	\end{align*}
Now we will show that \eqref{MDeq3} is also true for $q=m+1$. Note that
\begin{align*}
		\dfrac{d^{m+1}}{dt^{m+1}}\{\phi(e^{it})\}=&\sum_{k=0}^{m-1}\Big[\Big(p^{(1)}_{k,m}(\lambda)\psi^{(m-k)}(\lambda)+p_{k,m}(\lambda)\psi^{(m+1-k)}(\lambda)\Big)\dfrac{d\lambda}{dt}+(-\lambda)p_{k,m}(\lambda)\psi^{(m-k)}(\lambda)\Big]\dfrac{d\lambda}{dt}\\
		=&~p_{0,m}(\lambda)\psi^{(m+1)}(\lambda)\left(\dfrac{d\lambda}{dt}\right)^2+\sum_{k=0}^{m-2}\Bigg[  p_{k+1,m}(\lambda)\dfrac{d\lambda}{dt}+p^{(1)}_{k,m}(\lambda)\dfrac{d\lambda}{dt} +(-\lambda)p_{k,m}(\lambda)\Bigg]\\
		&\times\psi^{(m-k)}(\lambda)\dfrac{d\lambda}{dt}+\left[p^{(1)}_{m-1,m}(\lambda)\dfrac{d\lambda}{dt}+(-\lambda) p_{m-1,m}(\lambda)\right]\psi^{(1)}(\lambda)\dfrac{d\lambda}{dt}\\
		=&~p_{0,m}(\lambda)\psi^{(m+1)}(\lambda)\left(\dfrac{d\lambda}{dt}\right)^2+\sum_{k=1}^{m-1}\Bigg[  p_{k,m}(\lambda)\dfrac{d\lambda}{dt}+p^{(1)}_{k-1,m}(\lambda)\dfrac{d\lambda}{dt} +(-\lambda)p_{k-1,m}(\lambda)\Bigg]\\
		&\times\psi^{(m+1-k)}(\lambda)\dfrac{d\lambda}{dt}+\left[p^{(1)}_{m-1,m}(\lambda)\dfrac{d\lambda}{dt}+(-\lambda) p_{m-1,m}(\lambda)\right]\psi^{(1)}(\lambda)\dfrac{d\lambda}{dt}\\
		=&\left(\sum_{k=0}^{m}p_{k,m+1}(\lambda)\psi^{(m+1-k)}(\lambda)\right)\dfrac{d\lambda}{dt},
	\end{align*} 
where
	\begin{align*}
		p_{k,m+1}(\lambda)=\begin{cases}
			(-1/2)(1+\lambda^2)~p_{0,m}(\lambda) &\text{ for } k=0,\\
			(-1/2)\left\{(1+\lambda^2)\left(p_{k,m}(\lambda)+p^{(1)}_{k-1,m}(\lambda)\right)+2\lambda p_{k-1,m}(\lambda) \right\} &\text{ for } 1\leq k\leq m-1,\\
			(-1/2)\left[(1+\lambda^2)p^{(1)}_{m-1,m}(\lambda)+2\lambda p_{m-1,m}(\lambda) \right]&\text{ for } k=m,
		\end{cases}
	\end{align*} and degree of $p_{k,m+1}$ is $(2((m+1)-1)-k)$, and hence \eqref{MDeq3} is true for $q=m+1$. Therefore the result follows by principle of mathematical induction. This completes the proof.
\end{proof}
Now we are in a position to state and prove our main result in this section. It is important to note that we make the hypothesis of our next theorem in such a way so that we can apply Theorem~\ref{thconcon} to achieve our goal.

\begin{thm}\label{thdisdis}
	Let $A_0$ and $A_1$ be two maximal dissipative operators on $\hil$ such that
	\vspace{0.1in}
	
		$(i)$  $\dim \ker (A_j+i)=\dim\ker (A^*_j-i),$ for $j=0,1$,
		\vspace{0.1in}
		
		$(ii)$ $(A_1-i)^{-1}-(A_0-i)^{-1}\in\bnh$, and
		\vspace{0.1in}
		 
		$(iii)$ \textup{Im} $A_j=\dfrac{A_j-A_j^*}{2i}\in\mathcal{B}_{n/2}(\hil)$ for $j=0,1$.
		\vspace{0.1in}
		
		Let $T_0=-(A_0+i)(A_0-i)^{-1}$ and $T_1=-(A_1+i)(A_1-i)^{-1}$ be the corresponding contractions obtained by the Cayley transform of  maximal dissipative operators $A_0$ and $A_1$ respectively. Set $A_s=\Big(i-2i(T_s+1)^{-1}\Big)$, where $T_s$ as in \eqref{PCeqpath}.
		Then for $\psi\in \mathcal{R}_n$, 
		\begin{align*}
		\left\{\psi(A_1)-\psi(A_0)-\sum_{k=0}^{n-1}\dfrac{1}{k!}\dfrac{d^k}{ds^k}\Big|_{s=0}\psi(A_s)\right\}\in\boh,
		\end{align*} 
		and there exists an $L^1(\mathbb{T})$-function  $\xi_n$ 
		(unique up to additive constant) depend only on $n,A_1$ and $A_0$ such that 
		\begin{align*}
			\textup{Tr}\left\{\psi(A_1)-\psi(A_0)-\sum_{k=1}^{n-1}\dfrac{1}{k!}\dfrac{d^k}{ds^k}\Big|_{s=0}\psi(A_s) \right\}=\int_{0}^{2\pi}\dfrac{d^n}{dt^n}\{\phi(e^{it})\}\xi_n(t)dt,
		\end{align*}
where $\psi(\lambda)=\phi\left(\dfrac{i+\lambda}{i-\lambda}\right)$ for some $\phi\in\mathcal{F}_n(\mathbb{T})$ and $\lambda\in \mathbb{R}$. Moreover, if  $\psi\in \mathcal{S}(\mathbb{R})$ (Schwartz class of functions on $\mathbb{R}$), then there exists $\eta_n\in L^1\Big(\mathbb{R},(1+\lambda^2)^{-n}d\lambda \Big)$ such that 
\begin{align*}
	\textup{Tr}\left\{\psi(A_1)-\psi(A_0)-\sum_{k=0}^{n-1}\dfrac{1}{k!}\dfrac{d^k}{ds^k}\Big|_{s=0}\psi(A_s) \right\}=\int_{-\infty}^{\infty}\psi^{(n)}(\lambda)\eta_n(\lambda)d\lambda.
\end{align*}		
\end{thm}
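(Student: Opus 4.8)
The plan is to reduce the dissipative problem to the contraction problem already settled in Theorem~\ref{thconcon} via the Cayley transform, and then to transplant the resulting circle formula to the real line using Lemma~\ref{lm2}. First I would record the Cayley transforms $T_j=-(A_j+i)(A_j-i)^{-1}$ and verify that the pair $(T_0,T_1)$ satisfies the two hypotheses of Theorem~\ref{thconcon}. Condition $(i)$ there is immediate from $\ker T_j=\ker(A_j+i)$ and $\ker T_j^*=\ker(A_j^*-i)$ together with hypothesis $(i)$. For condition $(ii)$, the elementary identity $T_j=-I-2i(A_j-i)^{-1}$ gives $T_1-T_0=-2i\big[(A_1-i)^{-1}-(A_0-i)^{-1}\big]$, so $T_1-T_0\in\bnh$ by hypothesis $(ii)$; and from \eqref{MDeq1} one has $(I-T_j^*T_j)^{1/2}=D_{T_j}=2\big|(-\textup{Im}\,A_j)^{1/2}(A_j-i)^{-1}\big|$, which lies in $\bnh$ because $(-\textup{Im}\,A_j)^{1/2}\in\mathcal{B}_n(\hil)$ by hypothesis $(iii)$, the resolvent $(A_j-i)^{-1}$ is bounded, and $|X|$ has the same singular values as $X$.

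With the hypotheses in hand, I would apply Theorem~\ref{thconcon} to $(T_0,T_1)$ for an arbitrary $\phi\in\mathcal{F}_n(\mathbb{T})$, obtaining the trace-class membership and the circle trace formula with density $\xi_n\in L^1(\mathbb{T})$ and path $T_s$ as in \eqref{PCeqpath}. The bridge back to the dissipative side is the defining correspondence preceding Lemma~\ref{lm2}, namely $\psi_+(A)=\phi_+(T)$ and $\psi_-(-A^*)=\phi_-(T^*)$, whence $\psi(A_j)=\phi(T_j)$. Moreover, since $A_s=i-2i(T_s+1)^{-1}$ is exactly the inverse Cayley transform of $T_s$, the same correspondence yields the operator identity $\psi(A_s)=\phi(T_s)$ for each $s$, and hence $\frac{d^k}{ds^k}\big|_{s=0}\psi(A_s)=\frac{d^k}{ds^k}\big|_{s=0}\phi(T_s)$. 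Substituting these equalities into the conclusion of Theorem~\ref{thconcon} produces at once the trace-class membership and the first trace formula of the theorem, with the very same $\xi_n$. Along the way one must check that $-1\notin\sigma(T_s)$ so that $A_s$ is well defined (here $T_s=\PH e^{isM}W$ is a genuine contraction because $W$ is an isometry).

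It remains to produce the real-variable formula for $\psi\in\mathcal{S}(\mathbb{R})$. I would start from the circle formula and rewrite its integrand using Lemma~\ref{lm2} with $q=n$: under $e^{it}=\frac{i+\lambda}{i-\lambda}$, that is $\lambda=-\tan\frac{t}{2}$,
\[
\frac{d^n}{dt^n}\{\phi(e^{it})\}=\Big(\sum_{k=0}^{n-1}p_{k,n}(\lambda)\,\psi^{(n-k)}(\lambda)\Big)\frac{d\lambda}{dt}.
\]
The factor $\frac{d\lambda}{dt}$ combines with $dt$ to give $d\lambda$, so the change of variables turns the circle integral into $\int_{-\infty}^{\infty}\big(\sum_{k=0}^{n-1}p_{k,n}(\lambda)\psi^{(n-k)}(\lambda)\big)\widetilde\xi_n(\lambda)\,d\lambda$, where $\widetilde\xi_n(\lambda)=\xi_n(t(\lambda))$; since $\frac{dt}{d\lambda}=-\frac{2}{1+\lambda^2}$, this density satisfies $\widetilde\xi_n\in L^1\big(\mathbb{R},(1+\lambda^2)^{-1}d\lambda\big)$. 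Finally I would consolidate the lower-order derivative terms into a single $\psi^{(n)}$ term: for each $k$, choosing $g_k$ to be a $k$-fold antiderivative of $(-1)^k p_{k,n}\widetilde\xi_n$ and integrating by parts $k$ times (all boundary terms vanishing because $\psi$ is Schwartz) replaces $\int p_{k,n}\psi^{(n-k)}\widetilde\xi_n$ by $\int \psi^{(n)}g_k$. Setting $\eta_n=\sum_{k=0}^{n-1}g_k$ then yields the asserted representation $\int_{-\infty}^{\infty}\psi^{(n)}(\lambda)\eta_n(\lambda)\,d\lambda$.

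The hard part will be this last step. The density $\widetilde\xi_n$ is merely weighted-$L^1$ and not differentiable, so the integration by parts must be read in the distributional sense, and one must verify that the iterated antiderivatives $g_k$ genuinely land in $L^1\big(\mathbb{R},(1+\lambda^2)^{-n}d\lambda\big)$. This is precisely where the degree bound $\deg p_{k,n}=2(n-1)-k$ from Lemma~\ref{lm2} is decisive: $k$-fold integration of $p_{k,n}\widetilde\xi_n$ grows at most like a polynomial of degree $2(n-1)$, which is exactly absorbed by the weight $(1+\lambda^2)^{-n}\sim\lambda^{-2n}$ to leave an integrable tail of order $\lambda^{-2}$. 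A secondary technical point is the change of variables itself, which is not monotone on $[0,2\pi]$ and sends $t=\pi$ to $\lambda=\pm\infty$; I would handle this by splitting $[0,\pi)$ and $(\pi,2\pi]$ and tracking the sign of $\frac{d\lambda}{dt}$.
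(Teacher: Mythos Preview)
Your proposal is correct and follows essentially the same route as the paper: reduce to Theorem~\ref{thconcon} via the Cayley transform (verifying its hypotheses exactly as you do), identify $\psi(A_s)$ with $\phi(T_s)$ by the functional-calculus correspondence to obtain the circle formula, then invoke Lemma~\ref{lm2} and integrate by parts to reach the real-line formula. One remark: your aside about needing $-1\notin\sigma(T_s)$ is unnecessary, since in this framework $\psi(A_s)$ is \emph{defined} to be $\phi(T_s)$ (via $\psi_+(A)=\phi_+(T)$, $\psi_-(-A^*)=\phi_-(T^*)$), so the inverse Cayley transform need not be a bona fide operator; your contraction argument alone would not have secured that spectral condition anyway.
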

\begin{proof}
Let $T_j=-(A_j+i)(A_j-i)^{-1}$ be the contraction obtained via the Cayley transform of a maximal dissipative operator $A_j$ and hence  $\ker T_j=\ker (A_j+i)$ and $\ker T_j^*=\ker (A_j^*-i)$ for $j=0,1$. Furthermore, note that 
\[
T_1-T_0=-2i\left[(A_1-i)^{-1}-(A_0-i)^{-1}\right].
\]
Therefore using the hypothesis $(i)$, $(ii)$ and $(iii)$ we conclude that the pair of contractions $(T_0,T_1)$ on $\mathscr{H}$ satisfies the hypothesis $(i)$ and $(ii)$ of Theorem~\ref{thconcon}. Let $V_j$ be the unitary operator on $\mathscr{H}$ such that $(A_j+i)(A_j-i)^{-1}=V_j|(A_j+i)(A_j-i)^{-1}|$ for $j=0,1$. 
Thus by applying Theorem~\ref{thconcon} corresponding to the pair $(T_0,T_1)$ we get for $\phi\in \mathcal{F}_n(\mathbb{T})$,
\begin{align}\label{MDeq4}
	\left\{\phi(T_1)-\phi(T_0)-\sum_{k=1}^{n-1}\dfrac{1}{k!}\dfrac{d^k}{ds^k}\Big|_{s=0}\phi(T_s) \right\}\in\boh,
\end{align} 
and there exists an $L^1(\mathbb{T})$-function  $\xi_n$ 
(unique up to additive constant) depend only on $n,T_1$ and $T_0$ such that 
\begin{align}\label{MDeq5}
	\textup{Tr}\left\{\phi(T_1)-\phi(T_0)-\sum_{k=1}^{n-1}\dfrac{1}{k!}\dfrac{d^k}{ds^k}\Big|_{s=0}\phi(T_s) \right\}=\int_{0}^{2\pi}\dfrac{d^n}{dt^n}\{\phi(e^{it})\}\xi_n(t)dt,
\end{align}
where 
\begin{align}
T_s=\PH e^{isM}\begin{bmatrix}
	0\\
	-(A_0+i)(A_0-i)^{-1}\\
	|2(-\textup{Im} A_0)^{1/2}(A_0-i)^{-1}|\\
	0
\end{bmatrix},~~s\in [0,1],
\end{align}
and $M$ is a self-adjoint operator on $\mathscr{F}\oplus \overline{\left( (A_1^*+i)^{-1}(\textup{Im} A_1)\hil  \right)}$ such that  $$\mathscr{F}= \mathbf{H}^2_{\overline{\left( (A_0-i)^{-1}(\textup{Im} A_0)\hil\right)}}(\D)\oplus\hil\oplus\mathbf{H}^2_{ \overline{\left( (A_0^*+i)^{-1}(\textup{Im} A_0)\hil  \right)}}(\mathbb{D}),$$  $\sigma(M)\subseteq (-\pi,\pi]$,
$M\in \mathcal{B}_n(\mathscr{F}\oplus \overline{\left( (A_1^*+i)^{-1}(\textup{Im} A_1)\hil  \right)})$ and $\mathcal{M}=e^{iM}$. Furthermore, the block matrix representation of $\mathcal{M}$ on $\mathscr{F}\oplus \overline{\left( (A_1^*+i)^{-1}(\textup{Im} A_1)\hil  \right)}$ is the following:
\begin{align*}
	\mathcal{M}:=\begin{bmatrix}
		I&0&0&0\\
		0&\mathcal{M}_{22}&\mathcal{M}_{23}&\mathcal{M}_{24}\\
		0&\mathcal{M}_{32}& \mathcal{M}_{33}&0\\
		0&\mathcal{M}_{42}&\mathcal{M}_{43}&\mathcal{M}_{44}
	\end{bmatrix}:
	\begin{bmatrix}
	\mathbf{H}^2_{\overline{\left( (A_0-i)^{-1}(\textup{Im} A_0)\hil\right)}}(\D)\\
		\mathscr{H}\\
		\mathbf{H}^2_{ \overline{\left( (A_0^*+i)^{-1}(\textup{Im} A_0)\hil  \right)}}(\mathbb{D})\\
		\overline{\left( (A_1^*+i)^{-1}(\textup{Im} A_1)\hil  \right)}	
	\end{bmatrix}\to \begin{bmatrix}
	\mathbf{H}^2_{\overline{\left( (A_0-i)^{-1}(\textup{Im} A_0)\hil\right)}}(\D)\\
	\mathscr{H}\\
	\mathbf{H}^2_{ \overline{\left( (A_0^*+i)^{-1}(\textup{Im} A_0)\hil  \right)}}(\mathbb{D})\\
	\overline{\left( (A_1^*+i)^{-1}(\textup{Im} A_1)\hil  \right)}	
\end{bmatrix},
\end{align*}
where $\mathcal{M}_{22}=(A_1+i)(A_1-i)^{-1}(A_0^*+i)^{-1}(A_0^*-i)$, $\mathcal{M}_{23}=-(A_1+i)(A_1-i)^{-1}2|(-\textup{Im} A_0)^{1/2}$ $(A_0-i)^{-1}|P_{\overline{\left( (A_0^*+i)^{-1}(\textup{Im} A_0)\hil  \right)}}$, $\mathcal{M}_{24}=2|(-\textup{Im} A_1)^{1/2}(A_1^*+i)^{-1}|V_1$, $\mathcal{M}_{32}=2V_0^*|(-\textup{Im} A_0^*)^{1/2}(A_0^*+i)^{-1}|$, $\mathcal{M}_{33}=|(A_0+i)(A_0-i)^{-1}|P_{\overline{\left( (A_0^*+i)^{-1}(\textup{Im} A_0)\hil  \right)}} + (I-P_{\overline{\left( (A_0^*+i)^{-1}(\textup{Im} A_0)\hil  \right)}})$, $\mathcal{M}_{42}=-2|(-\textup{Im} A_1)^{1/2}$ $(A_1-i)^{-1}|(A_0^*+i)^{-1}(A_0^*-i)$, $\mathcal{M}_{43}=4|(-\textup{Im} A_1)^{1/2}(A_1-i)^{-1}||(-\textup{Im} A_0)^{1/2}(A_0-i)^{-1}|$ \\$P_{\overline{\left( (A_0^*+i)^{-1}(\textup{Im} A_0)\hil  \right)}}$, and $\mathcal{M}_{44}=(A_1^*+i)^{-1}(A_1^*-i)V_1$. Now it easy to observe that for  $\psi(\lambda)=\phi\left(\dfrac{i+\lambda}{i-\lambda}\right)\in\mathcal{R}_n$, where $\phi\in \mathcal{F}_n(\mathbb{T})$,
\begin{align}\label{MDeq6}
	\psi(A_1)-\psi(A_0)-\sum_{k=0}^{n-1}\dfrac{1}{k!}\dfrac{d^k}{ds^k}\Big|_{s=0}\psi(A_s)=\phi(T_1)-\phi(T_0)-\sum_{k=1}^{n-1}\dfrac{1}{k!}\dfrac{d^k}{ds^k}\Big|_{s=0}\phi(T_s),
\end{align}
where $A_s=\left(i-2i(T_s+1)^{-1}\right)$. Therefore using equations \eqref{MDeq4}, \eqref{MDeq5} and \eqref{MDeq6} we conclude that 
\begin{align*}
	\left\{\psi(A_1)-\psi(A_0)-\sum_{k=0}^{n-1}\dfrac{1}{k!}\dfrac{d^k}{ds^k}\Big|_{s=0}\psi(A_s)\right\}\in\boh,
\end{align*} 
and there exists an $L^1(\mathbb{T})$-function  $\xi_n$ 
(unique up to additive constant) depend only on $n,A_1$ and $A_0$ such that 
\begin{align*}
	\textup{Tr}\left\{\psi(A_1)-\psi(A_0)-\sum_{k=1}^{n-1}\dfrac{1}{k!}\dfrac{d^k}{ds^k}\Big|_{s=0}\psi(A_s) \right\}=\int_{0}^{2\pi}\dfrac{d^n}{dt^n}\{\phi(e^{it})\}\xi_n(t)dt,
\end{align*}
which by applying Lemma~\ref{lm2} yields that
\begin{align}\label{MDeq7}
	 \textup{Tr}\left\{\psi(A_1)-\psi(A_0)-\sum_{k=0}^{n-1}\dfrac{1}{k!}\dfrac{d^k}{ds^k}\Big|_{s=0}\psi(A_s) \right\}=\int_{-\infty}^{\infty}\left(\sum_{k=0}^{n-1}p_{k,n}(\lambda)\psi^{(n-k)}(\lambda)\right)\eta_{_n}(\lambda)d\lambda,	
\end{align} where $\eta_n(\lambda)=\xi_n(t)=\xi_n(-2tan^{-1}(\lambda))$ and $\eta_n\in L^1\left(\mathbb{R},(1+\lambda^2)^{-1}d\lambda \right)$. In particular, if we consider $\psi\in \mathcal{S}(\mathbb{R})\subset\mathcal{R}_n$, then by performing integration by-parts \eqref{MDeq7} becomes 
\begin{align*}
	\textup{Tr}\left\{\psi(A_1)-\psi(A_0)-\sum_{k=0}^{n-1}\dfrac{1}{k!}\dfrac{d^k}{ds^k}\Big|_{s=0}\psi(A_s) \right\}=\int_{-\infty}^{\infty}\psi^{(n)}(\lambda)\zeta_n(\lambda)d\lambda,
\end{align*}
where
\begin{align*}	
	\zeta_n(\lambda)=\left[ \sum_{k=0}^{n-1}(-1)^{k}\eta_{_k,_k}^n(\lambda)\right], \eta^n_{_0,_0}(\lambda)=p_{_0,_n}(\lambda),~\text{and}~
	\eta^n_{_j,_k}(\lambda)=&\begin{cases}
		\int_{0}^{\lambda}p_{_k,_n}(\lambda)\eta_{_n}(\lambda) &\text{ if } j=1,\\
		\int_{0}^{\lambda}\eta^n_{_{j-1},_k}(\lambda) &\text{ if } 2\leq j\leq k.
	\end{cases}
\end{align*}
This completes the proof. 
\end{proof}

		\section*{Acknowledgments}
	\textit{The research of the first named author is supported by the Mathematical Research Impact Centric
		Support (MATRICS) grant, File No :MTR/2019/000640, by the Science and Engineering Research Board (SERB), Department of Science $\&$ Technology (DST), Government of India. The second named author gratefully acknowledge the support provided by IIT Guwahati, Government of India.}

\end{document}